\documentclass[10pt]{amsart}

\linespread{1.1}
\usepackage{cite}
\usepackage{graphicx,epsfig}
\usepackage{amsmath,amsfonts,amssymb,amsthm,amsbsy}
\usepackage{color}

\def\R{\mathbb{R}}

\def\xta{\langle \xi,\eta \rangle}

\theoremstyle{plain}
\newtheorem{thm}{Theorem}
\newtheorem{lemma}{Lemma}
\newtheorem{cor}{Corollary}
\newtheorem{rem}{Remark}

\theoremstyle{definition}

\textwidth 5.90in
\textheight 7.70in
\oddsidemargin 0.2in
\evensidemargin 0.2in

\begin{document}

\title[New monotonicity formulae]{New monotonicity formulas for the curve shortening flow in $\R^3$}

\author{Hayk Mikayelyan}

\date{\today}

\address{\noindent Mathematical Sciences\\
The University of Nottingham Ningbo China\\
Taikang Dong Lu Nr. 199,
Ningbo 315100\\ PR China}

\email{\noindent Hayk.Mikayelyan@nottingham.edu.cn}

\maketitle

\begin{abstract}
We apply the stabilization technique, developed by T. Zelenyak in 1960s for parabolic equations, on 
curve shortening flow in $\R^3$, and derive several new monotonicity formulas. All 
of them share one main feature: the dependence of the ``energy'' term on 
the angle between the position vector and the plane orthogonal
to the tangent vector. The first formula deals with the projection of the curve on the unit sphere,
and computes the derivative of its length.
The second formula 
is the generalization of the
classical formula of G. Huisken, while the third one  
is the generalization of the monotonicity formula with logarithmic terms previously
derived by the author for plane curves, \cite{M}.
\end{abstract}

\thanks{AMS MSC2020: 53E10; 35K40; 35K93 }

\thanks{Keywords: curve shortening flow, monotonicity formula}

\section{ Introduction}
The curve shortening problem is one of the most beautiful and classical problems in geometric PDEs. It models a curve
moving by its curvature vector, and questions like singularity formation, long time asymptotics of rescaled solutions, existence of ancient solutions, etc., have 
been in the focus of research in past decades.  
We give a short introduction of the problem in Section \ref{sec:probset}, and would like to 
refer the reader to the following book and lecture notes \cite{CZ, Ha}, as well as some 
important results \cite{AL, AAAW, A, Ang, AngV, DHS, E, Ga, GH, G, Hu, Hu1}, for a comprehensive introduction to the topic.

In this article we develop ideas from \cite{M}, 
where a new monotonicity formula has been derived for the curve shortening flow in the plane, 
to obtain several  
monotonicity formulas in $\R^3$. The idea is based on techniques from the article by T. Zelenyak \cite{Z}, where 
general monotonicity formulas for parabolic problems on an interval have been derived. 
This allows to compute the time derivative of certain ``energies'' depending
on the angle between the position vector and the plane orthogonal
to the tangent vector. 
In $\R^2$ the angle between the position vector and the normal vector has been considered 
mainly in the context of the support functions of convex curves (see \cite{GH}, \cite{CZ}). 
To the best knowledge of the author, however, 
monotonicity formulas involving the support functions have not been considered. 

Our first monotonicity formula, 
see Section~\ref{sec:mainres},
deals with the projection of the curve on the unit sphere,
and computes the derivative of its length. Since this quantity 
is constant for plane, star-shaped curves (see Corollary \ref{thm3} on page \pageref{thm3}), the right hand side of our formula 
measures how much a curve in $\R^3$ deviates from being plane and ``star-shaped''. 

Huisken's monotonicity formula  (see \cite{Hu}) plays a crucial role in the theory 
of flows driven by the mean curvature, in particular the curve shortening flow 
(see the equation (\ref{monformHu}) on page \pageref{monformHu}). 
The second formula we derive 
is the generalization of this
classical formula, which is not covered by the larger class of monotonicity formulas 
introduced in \cite{Hu1}, Corollary 4.2.

Our third formula  
is the generalization of the monotonicity formula with logarithmic terms, previously
derived by the author for the star-shaped plane curves, see \cite{M}.

The results we present provide more involved tools for the analysis of the stability of the curves, 
and we believe that the method introduced can be applied for other geometric co-dimension two problems in $\R^3$.


\subsection{Problem setting}\label{sec:probset}
We consider a closed curve in $\mathbb{R}^3$ moving by its curvature
$$
\partial_t \gamma = \kappa\nu,
$$
where $\gamma: (0,T) \times S^1\to \mathbb{R}^3$ is the curve parametrization,
\begin{equation}\label{kappa}
\kappa=
\frac{|\gamma''\times\gamma'|}{|\gamma'|^3}
\end{equation}
is the curvature and
$$\nu=
\frac{\gamma'\times (\gamma''\times\gamma')}{|\gamma'||\gamma''\times\gamma'|}
$$
is the normal vector. Here $ ' $ means the derivative in $x\in S^1$ variable.

Assume the first singularity appears at point $0$ after finite time $T$.
We rescale the parametrization in the following way
$$
\tau=-\log(T-t), \,\,\,\,\,\,\, \tilde{\gamma}(\tau,x)=(T-t)^{-\frac{1}{2}}\gamma(t,x)
$$
and arrive at
\begin{equation}  \label{main}
\partial_\tau\tilde{\gamma}=\frac{1}{2}{\tilde \gamma}+
\frac{{\tilde \gamma}'\times ({\tilde \gamma}''\times{\tilde \gamma}')}{|{\tilde \gamma}'|^4},
\end{equation}
which is going to be the main equation we consider in this article.

Throughout the paper 
$$
\psi=\arcsin \frac{\langle {\tilde\gamma},{\tilde\gamma}' \rangle}{|{\tilde\gamma}||{\tilde\gamma}'|}\in[-\tfrac{\pi}{2},\tfrac{\pi}{2}]
$$
will denote the angle between the position vector $ {\tilde\gamma}$ and the plane orthogonal
to the tangent vector $ {\tilde\gamma}'$, 
with a sign coming from the sign of $\langle {\tilde\gamma},{\tilde\gamma}' \rangle$. 
\vspace{3mm}

The paper is organized as follows: in Section~\ref{sec:mainres} the main results are introduced,
in Section~\ref{stabtech} the stabilization technique is presented, and in Section~\ref{sec:Hu}
this technique is illustrated on the classical formula of G. Huisken.
In Section~\ref{sec:computD}
some ``heavy'' computations of the so-called remainder terms are conducted, and in Section~\ref{sec:newmon}
the proofs of the results are derived from these computations.

\vspace{3mm}

{\bf Acknowledgments.} The author would like to express his gratitude and appreciation to Sigurd Angenent  and Gerhard Huisken for valuable feedback.


\section{Main results}\label{sec:mainres}

Let us consider the length of the projection of the curve on the unit sphere given by 
$$
\int_{S^1}
\frac{|{\tilde\gamma}'|}{|{\tilde\gamma}|}\cos\psi
dx.
$$
The next theorem establishes a monotonicity relation for this length.
\begin{thm}\label{thm2}
Let ${\tilde\gamma}$ be the rescaled curve shortening flow in (\ref{main}).
Then 
\begin{equation}\label{cosformul}
\frac{d}{d\tau}\int_{S^1}
\frac{|{\tilde\gamma}'|}{|{\tilde\gamma}|}\cos\psi
dx=
-
\int_{S^1}
\frac{|{\tilde\gamma}'|}{|{\tilde\gamma}|^3\cos^3\psi}\kappa^2
\big|\textup{Proj}_{\nu\times{\tilde\gamma}'} {\tilde\gamma}\big|^2 
dx
-
2\sum_{\psi(x)=\pm\frac{\pi}{2}}\frac{\kappa(x)}{|\tilde  \gamma(x)|},
\end{equation}
where the second sum is taken over the points where $\psi=\pm \frac{\pi}{2}$ 
(or ${\tilde\gamma}\parallel{\tilde \gamma}'$).\\
Further, let $a\in C([0,\infty))$ be an arbitrary continuous function on $[0,\infty)$. Then 
\begin{equation}\label{cosformul2}
\int_{S^1}
a(|{\tilde\gamma}|)|{\tilde\gamma}'|\sin\psi
dx=0.
\end{equation}
 \end{thm}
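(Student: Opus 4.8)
The plan is to prove the two statements separately; the identity \textup{(\ref{cosformul2})} is elementary and uses nothing about the flow, while \textup{(\ref{cosformul})} is a first–variation–of–arc–length computation for the spherical projection $\sigma:=\tilde\gamma/|\tilde\gamma|$. For \textup{(\ref{cosformul2})}, writing $v=|\tilde\gamma'|$, $r=|\tilde\gamma|$, $T=\tilde\gamma'/v$, one has $|\tilde\gamma'|\sin\psi=\langle\tilde\gamma,\tilde\gamma'\rangle/|\tilde\gamma|=\partial_x r$, so $a(r)\,|\tilde\gamma'|\sin\psi=\partial_x\big(A(r)\big)$ with $A'=a$, and the integral over the closed curve $S^1$ of an exact $x$–derivative vanishes. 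For \textup{(\ref{cosformul})} the starting point is
\[
\frac{|\tilde\gamma'|}{|\tilde\gamma|}\cos\psi=\frac{|\tilde\gamma\times\tilde\gamma'|}{|\tilde\gamma|^2}=\big|\partial_x\sigma\big|,
\]
so the left–hand side of \textup{(\ref{cosformul})} is the length $L(\tau)$ of the closed spherical curve $\sigma$. Differentiating \textup{(\ref{main})} one checks that the dilation term $\tfrac12\tilde\gamma$ is radial and drops out of $\partial_\tau\sigma$, leaving $\partial_\tau\sigma=\tfrac1r\,\textup{Proj}_{\sigma^\perp}(\kappa\nu)$, which is automatically tangent to $S^2$.

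Next I would apply the first variation of length, $\tfrac{dL}{d\tau}=\int_{S^1}\langle\hat T,\partial_x\partial_\tau\sigma\rangle\,dx$ with $\hat T=\partial_x\sigma/|\partial_x\sigma|$, and integrate by parts. The key subtlety is that $\sigma$ is not an immersion: at a point $x_0$ with $\psi(x_0)=\pm\tfrac\pi2$ (equivalently $\tilde\gamma\parallel\tilde\gamma'$) one has $\partial_x\sigma=\tfrac vr(T-\sin\psi\,\sigma)=0$, so $\sigma$ has a cusp there. Splitting $S^1$ into the finitely many arcs delimited by these points and integrating by parts on each gives
\[
\frac{dL}{d\tau}=-\int_{S^1}\big\langle\partial_\tau\sigma,\,\partial_{\hat s}\hat T\big\rangle\,d\hat s\;+\;\sum_{\psi(x_0)=\pm\frac\pi2}\big\langle\hat T(x_0^-)-\hat T(x_0^+),\,\partial_\tau\sigma(x_0)\big\rangle,
\]
where $\hat s$ is arc length along $\sigma$ and $d\hat s=|\partial_x\sigma|\,dx=\tfrac vr\cos\psi\,dx$.

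For the interior integral I would introduce, away from the cusps, the orthonormal frame $\{T,n,b\}$ with $n=(\sigma-\sin\psi\,T)/\cos\psi$ and $b=T\times n$, so that $\sigma=\sin\psi\,T+\cos\psi\,n$, $\hat T=\cos\psi\,T-\sin\psi\,n$, the intrinsic unit normal of $\sigma$ in $S^2$ is $\hat N=\sigma\times\hat T=-b$, and $\nu=\cos\theta\,n+\sin\theta\,b$ for some angle $\theta$. Writing $\partial_{\hat s}\hat T=\kappa_g\hat N-\sigma$ and pairing the Frenet relation $\partial_{\hat s}T=\tfrac{r\kappa}{\cos\psi}\nu$ with $\hat N$ yields the pointwise geodesic–curvature identity $\kappa_g\cos^2\psi=-\,r\kappa\sin\theta$, whence
\[
\big\langle\partial_\tau\sigma,\partial_{\hat s}\hat T\big\rangle=\frac{\kappa\kappa_g}{r}\langle\nu,\hat N\rangle=-\frac{\kappa\kappa_g\sin\theta}{r}=\frac{\kappa^2\sin^2\theta}{\cos^2\psi}.
\]
On the other hand $\nu\times\tilde\gamma'=v(\sin\theta\,n-\cos\theta\,b)$, so $|\textup{Proj}_{\nu\times\tilde\gamma'}\tilde\gamma|^2=\langle\tilde\gamma,\sin\theta\,n-\cos\theta\,b\rangle^2=r^2\cos^2\psi\sin^2\theta$. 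Multiplying by $d\hat s=\tfrac vr\cos\psi\,dx$ and combining, $-\int\langle\partial_\tau\sigma,\partial_{\hat s}\hat T\rangle\,d\hat s$ equals exactly the first term on the right of \textup{(\ref{cosformul})}.

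Finally the cusp sum. At $x_0$ one has $\sigma(x_0)=\epsilon T(x_0)$ with $\epsilon=\pm1$ and $\partial_x\sigma(x_0)=0$; moreover $\partial_x(\sin\psi)(x_0)=0$ since $\sin^2\psi$ attains its maximal value $1$ there, which forces $\partial_x^2\sigma(x_0)=\tfrac{v^2\kappa}{r}\nu(x_0)$. Hence $\hat T(x_0^\pm)=\pm\nu(x_0)$ (the tangent reverses across the cusp) while $\partial_\tau\sigma(x_0)=\tfrac1r\textup{Proj}_{\sigma^\perp}(\kappa\nu)|_{x_0}=\tfrac{\kappa(x_0)}{r(x_0)}\nu(x_0)$, so each term of the sum equals $\big\langle-2\nu(x_0),\tfrac{\kappa}{r}\nu(x_0)\big\rangle=-\tfrac{2\kappa(x_0)}{|\tilde\gamma(x_0)|}$, which is the last term of \textup{(\ref{cosformul})}. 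The step I expect to be the main obstacle is exactly this local analysis at the points $\psi=\pm\tfrac\pi2$: justifying the integration by parts through the cusps, pinning down the orders of vanishing of $\cos\psi$ and of $\textup{Proj}_{\nu\times\tilde\gamma'}\tilde\gamma$ there (so that the weight $\cos^{-3}\psi$ in the interior integral remains integrable, and so that the degenerate subcase $\kappa(x_0)=0$, in which the corresponding term is simply $0$, is covered). The interior computation, though lengthy, is routine frame bookkeeping, and is the part for which the stabilization technique of Section~\ref{stabtech} offers an alternative route at the price of the ``heavy'' remainder–term computations.
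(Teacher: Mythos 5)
Your proof is correct but follows a genuinely different route from the paper's. For \eqref{cosformul2} the two arguments are essentially the same elementary observation ($|\tilde\gamma'|\sin\psi=\partial_x|\tilde\gamma|$, so the integrand is an exact $x$-derivative). For \eqref{cosformul}, however, you identify the left side as the length of the spherical projection $\sigma=\tilde\gamma/|\tilde\gamma|$ and run a first-variation-of-length argument, integrating by parts on the arcs between the cusp points $\{\psi=\pm\tfrac\pi2\}$; the continuous integrand comes out of a geodesic-curvature computation in the frame $\{T,n,b\}$, and the discrete sum arises as boundary terms from the jump $\hat T(x_0^-)-\hat T(x_0^+)=-2\nu(x_0)$. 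The paper instead gets the interior integral from the abstract stabilization machinery of Sections~\ref{stabtech}--\ref{sec:computD} (taking $f(\psi)=\cos\psi$, $a(r)=r^{-1}$ in Lemmas~2--3), which only applies where $\psi\neq\pm\tfrac\pi2$, and then recovers the discrete sum by a regularization: shift the reference point to $(0,0,\epsilon)$, apply the smooth formula, and perform a blow-up computation ($\int_{\R}(1+\sigma^2)^{-3/2}\,d\sigma=2$) in the limit $\epsilon\searrow 0$. Your route is more direct and geometric, and it makes the meaning of the cusp sum transparent; the paper's fits into the unified framework used for all three theorems and sidesteps the local asymptotics at the cusps. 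The one point you flag as the ``main obstacle'' is real but does close: at a cusp with $\kappa(x_0)\neq 0$ one has $\cos\psi\sim|x-x_0|$ (since $\partial_x^2\sigma(x_0)=\tfrac{v^2\kappa}{r}\nu\neq 0$), while $\textup{Proj}_{\nu\times\tilde\gamma'}\tilde\gamma=\langle\tilde\gamma,\beta\rangle\beta$ (with $\beta$ the binormal) vanishes to second order there (both $\langle\tilde\gamma,\beta\rangle$ and its first $x$-derivative vanish when $\tilde\gamma\parallel T$), so $\sin\theta\sim|x-x_0|$ and the integrand of the interior term is $O(|x-x_0|)$, hence integrable. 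Spelling out exactly these orders of vanishing is what turns your sketch into a complete alternative proof.
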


\begin{cor}\label{thm3} 
In the case of the plane curves the monotonicity formula (\ref{cosformul}) is trivial for the curves satisfying 
the condition
\begin{equation*}
\psi=\arcsin \frac{\langle {\tilde\gamma},{\tilde\gamma}' \rangle}{|{\tilde\gamma}||{\tilde\gamma}'|}\not=\pm\frac{\pi}{2},
\end{equation*}
and
$$
\int_{S^1}
\frac{|{\tilde\gamma}'|}{|{\tilde\gamma}|}\cos\psi
dx\equiv
2\pi m,
$$
where $m$ is the winding number. For general plane curves we have
\begin{equation}\label{cos2D}
\frac{d}{d\tau}\int_{S^1}
\frac{|{\tilde\gamma}'|}{|{\tilde\gamma}|}\cos\psi
dx=-
2\sum_{\psi(x)=\pm\frac{\pi}{2}}\frac{\kappa(x)}{|\tilde  \gamma(x)|},
\end{equation}
where the curvature $\kappa$ is defined in $\R^3$ by (\ref{kappa}), 
thus is non-negative.
\end{cor}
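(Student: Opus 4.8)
The plan is to obtain the statement by specializing Theorem~\ref{thm2} to the planar situation and then identifying the resulting constant with a winding number via polar coordinates. First I would fix the ambient plane: since $\tilde\gamma$ is a plane curve and the singularity forms at the origin, the affine plane $\Pi$ containing $\tilde\gamma$ must pass through $0$ (otherwise the component of the rescaled curve orthogonal to $\Pi$ would blow up as $\tau\to\infty$), so after a rotation we may assume $\Pi=\{x_3=0\}$. Then $\tilde\gamma$, $\tilde\gamma'$ and $\tilde\gamma''$ all lie in $\Pi$, hence $\tilde\gamma''\times\tilde\gamma'\parallel e_3$; consequently $\nu=\frac{\tilde\gamma'\times(\tilde\gamma''\times\tilde\gamma')}{|\tilde\gamma'||\tilde\gamma''\times\tilde\gamma'|}$ lies in $\Pi$, and therefore $\nu\times\tilde\gamma'\parallel e_3$ as well. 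Since $\tilde\gamma\in\Pi\perp e_3$, we get $\textup{Proj}_{\nu\times\tilde\gamma'}\tilde\gamma=0$ pointwise, so the first integral on the right-hand side of (\ref{cosformul}) vanishes identically. This already yields (\ref{cos2D}) for an arbitrary plane curve; the sign of the remaining sum is explained by the fact that $\kappa$ in (\ref{kappa}) is the \emph{unsigned} curvature.

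Next I would treat the star-shaped case. If $\psi\neq\pm\frac{\pi}{2}$ everywhere, the sum in (\ref{cos2D}) is empty, so $\int_{S^1}\frac{|\tilde\gamma'|}{|\tilde\gamma|}\cos\psi\,dx$ is constant along the flow, and it remains to compute its value. Writing $\tilde\gamma=r(\cos\theta,\sin\theta)$ in polar coordinates on $\Pi$ one finds $\langle\tilde\gamma,\tilde\gamma'\rangle=rr'$ and $|\tilde\gamma\times\tilde\gamma'|=r^{2}|\theta'|$, and since $\psi\in[-\frac{\pi}{2},\frac{\pi}{2}]$ Lagrange's identity gives $\cos\psi=\frac{|\tilde\gamma\times\tilde\gamma'|}{|\tilde\gamma||\tilde\gamma'|}$; combining these yields $\frac{|\tilde\gamma'|}{|\tilde\gamma|}\cos\psi=\frac{|\tilde\gamma\times\tilde\gamma'|}{|\tilde\gamma|^2}=|\theta'|$. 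The hypothesis $\psi\neq\pm\frac{\pi}{2}$ says exactly that $\tilde\gamma\not\parallel\tilde\gamma'$, i.e. $\theta'\neq 0$ on $S^1$, so $\theta'$ has a fixed sign and $\int_{S^1}|\theta'|\,dx=\bigl|\int_{S^1}\theta'\,dx\bigr|=2\pi|m|$, where $m$ is the winding number of $\tilde\gamma$ about the origin; this is precisely the statement that $\tilde\gamma$ is star-shaped with respect to $0$. Hence both sides of (\ref{cosformul}) vanish and the formula carries no information in this case.

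The argument is entirely elementary once Theorem~\ref{thm2} is in hand; the only steps needing a word of justification are that the plane of a planar shrinking solution passes through the singular point (so that the binormal projection of $\tilde\gamma$ vanishes), and the bookkeeping of the branch of $\theta$ together with the sign convention for $m$ — this is where what little work there is will be concentrated. No genuine obstacle is expected beyond these routine points.
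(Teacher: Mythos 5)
Your proposal is correct, but it takes a genuinely different route from the paper's. The paper proves (\ref{cos2D}) not as a corollary of (\ref{cosformul}) but as a \emph{step towards} it: it shifts the reference point of a planar curve to $(0,0,\epsilon)$ so that $\psi\neq\pm\tfrac{\pi}{2}$ everywhere, applies the stabilization identity (which at that stage yields (\ref{cosformul}) without the sum term), and then lets $\epsilon\searrow 0$, splitting the integral into $\{\cos\psi\geq\delta\}$ (which vanishes in the limit) and a union of small intervals around the zeros of $\cos\psi$, where a parabola approximation and the rescaling $\sigma=\kappa|\tilde\gamma|x/\epsilon$ produce the singular contributions $2\kappa/|\tilde\gamma|$. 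This $\epsilon$-argument is how the sum term in (\ref{cosformul}) is obtained in the first place. You instead read the corollary as a specialization of the already-proved Theorem~\ref{thm2}: you observe that for a plane curve through $0$ the binormal direction $\nu\times\tilde\gamma'$ is orthogonal to the plane while $\tilde\gamma$ lies in it, so $\textup{Proj}_{\nu\times\tilde\gamma'}\tilde\gamma\equiv 0$ and the integral term in (\ref{cosformul}) drops out, giving (\ref{cos2D}) immediately. This is cleaner and is the natural reading of a corollary, though it of course presupposes that (\ref{cosformul}) has been established for general (non-star-shaped) plane curves, which in the paper's logic is exactly what the $\epsilon$-limit provides. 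You also supply an explicit polar-coordinate identification $\tfrac{|\tilde\gamma'|}{|\tilde\gamma|}\cos\psi=|\theta'|$, which the paper only asserts implicitly; this is a useful addition. One small point: your computation naturally gives $2\pi|m|$, whereas the corollary writes $2\pi m$ --- to match, one must orient the parametrization so that the winding about the origin is positive, or read $m$ as the unsigned winding number.
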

\begin{rem}
If we let the function $a(r)$ in (\ref{cosformul2}) converge to the Dirac function in $r_0$, we will obtain the simple fact that 
a closed curve, which intersects the sphere of radius $r_0$ in non-tangential fashion, exits the sphere and enters it in the same number of points.  
\end{rem}
\vspace{3mm}

Our second result is the generalization of the classical monotonicity formula of G. Huisken.
\begin{thm}\label{thm1}
Let ${\tilde\gamma}$ be the rescaled curve shortening flow in (\ref{main}), $\lambda>0$, and let
\begin{equation}
\psi=\arcsin \frac{\langle {\tilde\gamma},{\tilde\gamma}' \rangle}{|{\tilde\gamma}||{\tilde\gamma}'|}.
\end{equation}
Then 
\begin{multline}\label{monformHG}
\frac{d}{d\tau}\int_{S^1}F_\lambda({\tilde\gamma},{\tilde\gamma}')dx=
-\int_{S^1}
\left| \kappa+\frac{1}{2}\langle{\tilde\gamma},\nu\rangle\right|^2
\rho_\lambda({\tilde\gamma},{\tilde\gamma}')dx\\
-
\int_{S^1}
\left(\frac{1}{4}+(\lambda-1)\frac{\kappa^2}{|{\tilde\gamma}|^2\cos^2\psi}\right)
|\textup{Proj}_{\nu\times{\tilde\gamma}'} {\tilde\gamma}|^2 
\rho_\lambda({\tilde\gamma},{\tilde\gamma}')dx,
\end{multline}
where 
$$
F_\lambda(\xi,\eta)=a_\lambda(|\xi|)|\eta|f_\lambda(\psi),\,\,\,
\rho_\lambda(\xi,\eta)=a_\lambda(|\xi|)|\eta|g_\lambda(\psi),
$$

$$
a_\lambda(r)=e^{-\frac{r^2}{4\lambda}}r^{\frac{1-\lambda}{\lambda}},
\,\,\,
g_\lambda(\psi)=\left(\tfrac{1}{\cos\psi}\right)^{\frac{\lambda-1}{\lambda}}
$$
and 
\begin{equation}\label{flamb}
f_\lambda(\psi)=\sin\psi\int_0^\psi (\cos t)^\frac{1}{\lambda}dt+\lambda (\cos \psi)^{1+\frac{1}{\lambda}}.
\end{equation}
\end{thm}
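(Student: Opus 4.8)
The plan is to run the stabilization scheme of Section~\ref{stabtech} — exactly as it will be carried out for the classical formula in Section~\ref{sec:Hu} — and then to feed in the remainder–term computations of Section~\ref{sec:computD}. Abbreviate $F_\lambda=F_\lambda({\tilde\gamma},{\tilde\gamma}')$, and recall that (\ref{main}) reads $\partial_\tau{\tilde\gamma}=\tfrac12{\tilde\gamma}+\kappa\nu$. Differentiating under the integral sign and integrating by parts in $x$ (there is no boundary, as $S^1$ is closed),
\[
\frac{d}{d\tau}\int_{S^1}F_\lambda\,dx
=\int_{S^1}\Big(\langle\nabla_\xi F_\lambda,\partial_\tau{\tilde\gamma}\rangle+\langle\nabla_\eta F_\lambda,\partial_x\partial_\tau{\tilde\gamma}\rangle\Big)dx
=\int_{S^1}\big\langle E_\lambda,\ \tfrac12{\tilde\gamma}+\kappa\nu\big\rangle\,dx,
\]
where $E_\lambda:=\nabla_\xi F_\lambda-\partial_x\big(\nabla_\eta F_\lambda\big)$ is the variational vector field of the functional. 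It is worth noting at the outset what the right-hand side of (\ref{monformHG}) should become: since $\langle\partial_\tau{\tilde\gamma},\nu\rangle=\kappa+\tfrac12\langle{\tilde\gamma},\nu\rangle$ and $\langle\partial_\tau{\tilde\gamma},\nu\times{\tilde\gamma}'\rangle=\tfrac12\langle{\tilde\gamma},\nu\times{\tilde\gamma}'\rangle$ (because $\nu\perp\nu\times{\tilde\gamma}'$), the first geometric factor and the $\tfrac14$-part of the second together are $|\textup{Proj}_{({\tilde\gamma}')^\perp}\partial_\tau{\tilde\gamma}|^2$, the squared normal velocity, so (\ref{monformHG}) asserts
\[
\frac{d}{d\tau}\int_{S^1}F_\lambda\,dx=-\int_{S^1}\big|\textup{Proj}_{({\tilde\gamma}')^\perp}\partial_\tau{\tilde\gamma}\big|^2\rho_\lambda\,dx-(\lambda-1)\int_{S^1}\frac{\kappa^2}{|{\tilde\gamma}|^2\cos^2\psi}\big|\textup{Proj}_{\nu\times{\tilde\gamma}'}{\tilde\gamma}\big|^2\rho_\lambda\,dx,
\]
which is the expected shape for a stabilization identity plus one genuinely new term.

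For the computation of $E_\lambda$ I would use the orthonormal frame $\{T,N,B\}$ with $T={\tilde\gamma}'/|{\tilde\gamma}'|$, $N$ the unit vector along $\textup{Proj}_{T^\perp}{\tilde\gamma}$, and $B=T\times N$; crucially this frame depends only on $({\tilde\gamma},{\tilde\gamma}')$, so differentiating $\nabla_\eta F_\lambda$ in $x$ never produces third derivatives of ${\tilde\gamma}$. In this frame ${\tilde\gamma}=|{\tilde\gamma}|\sin\psi\,T+|{\tilde\gamma}|\cos\psi\,N$, and the elementary identities
\[
\cos\psi\,\nabla_\xi\psi=\frac{\textup{Proj}_{\xi^\perp}\eta}{|\xi||\eta|},\qquad
\cos\psi\,\nabla_\eta\psi=\frac{\textup{Proj}_{\eta^\perp}\xi}{|\xi||\eta|},\qquad
\nabla_\xi|\xi|=\tfrac{\xi}{|\xi|},\qquad \nabla_\eta|\eta|=\tfrac{\eta}{|\eta|},
\]
lead, after a short computation, to the pleasant formula $\nabla_\eta F_\lambda=a_\lambda(|{\tilde\gamma}|)\big(f_\lambda(\psi)\,T+f_\lambda'(\psi)\,N\big)$, while $\nabla_\xi F_\lambda$ also lies in the $\{T,N\}$-plane. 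Hence $E_\lambda$ differs from an element of the $\{T,N\}$-plane only through the $B$-components of $\partial_x T=|{\tilde\gamma}'|\kappa\nu$ and of $\partial_x N$, both of which are explicit functions of ${\tilde\gamma},{\tilde\gamma}',{\tilde\gamma}''$. Pairing $E_\lambda$ with $\tfrac12{\tilde\gamma}+\kappa\nu$ then produces: from the $T$-direction, a ``tangential'' contribution; from the $N$-direction and the $\kappa$-terms, pieces involving $\kappa$, $\langle{\tilde\gamma},\nu\rangle$, $\langle{\tilde\gamma},\nu\rangle^2$; and from the $B$-direction, terms quadratic in ${\tilde\gamma}''$. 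One also uses $\langle{\tilde\gamma},\nu\rangle^2+|\textup{Proj}_{\nu\times{\tilde\gamma}'}{\tilde\gamma}|^2=|{\tilde\gamma}|^2\cos^2\psi$ to trade the binormal component of ${\tilde\gamma}$ for $\cos\psi$.

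The specific choices of $a_\lambda,f_\lambda,g_\lambda$ enter exactly at the stage where the dust has to settle, through two ODE identities:
\[
\frac{a_\lambda'(r)}{a_\lambda(r)}=-\frac{r}{2\lambda}+\frac{1-\lambda}{\lambda r},\qquad
f_\lambda''(\psi)+f_\lambda(\psi)=(\cos\psi)^{\frac{1-\lambda}{\lambda}}=g_\lambda(\psi),
\]
together with $f_\lambda(0)=\lambda$, $f_\lambda'(0)=0$; the first is immediate from the formula for $a_\lambda$, the second follows by differentiating (\ref{flamb}) twice. The first identity pins down the Gaussian weight and the power of $|{\tilde\gamma}|$ so that the tangential contribution disappears (this is the same mechanism that forces (\ref{cosformul2}), since $|{\tilde\gamma}|\sin\psi=\langle{\tilde\gamma},{\tilde\gamma}'\rangle/|{\tilde\gamma}'|$ is, up to a factor, an exact $x$-derivative). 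The second identity is what converts the remaining $N$- and $\kappa\nu$-terms into the single perfect square $|\kappa+\tfrac12\langle{\tilde\gamma},\nu\rangle|^2$ with weight $\rho_\lambda=a_\lambda(|{\tilde\gamma}|)|{\tilde\gamma}'|g_\lambda(\psi)$, and it also delivers the coefficient $\tfrac14+(\lambda-1)\kappa^2/(|{\tilde\gamma}|^2\cos^2\psi)$ in front of $|\textup{Proj}_{\nu\times{\tilde\gamma}'}{\tilde\gamma}|^2$ — the $\tfrac14$ being the binormal part of the velocity and the $(\lambda-1)$-part the new term. As a consistency check, $\lambda=1$ gives $f_1\equiv1$, $g_1\equiv1$, $a_1(r)=e^{-r^2/4}$, so that $F_1=\rho_1=e^{-|{\tilde\gamma}|^2/4}|{\tilde\gamma}'|$ and (\ref{monformHG}) collapses to the codimension-two form of Huisken's monotonicity formula recalled in (\ref{monformHu}).

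I expect the main obstacle to be purely organizational rather than conceptual: carrying the non-arclength parametrization (every $\partial_x$ emits factors of $|{\tilde\gamma}'|$ and $\langle{\tilde\gamma}',{\tilde\gamma}''\rangle$), and, above all, verifying that the terms quadratic in ${\tilde\gamma}''$ generated by $\partial_x$ acting on $T$ and on $N$ recombine — after the substitution $f_\lambda''+f_\lambda=g_\lambda$ — into precisely $\kappa^2$ and $\langle{\tilde\gamma},\nu\rangle^2$ contributions, leaving no stray curvature terms in the answer; this is the content of the ``heavy'' computations of Section~\ref{sec:computD}. A secondary point to be stated up front is that the identity is proved for ${\tilde\gamma}$ a smooth immersion, where $\psi$, the frame $\{T,N,B\}$, and the weights $a_\lambda,g_\lambda$ are well defined; in contrast to Theorem~\ref{thm2}, no boundary or point contributions arise, and one should record the mild behaviour of the integrands near $\psi=\pm\tfrac\pi2$ (where $|\textup{Proj}_{\nu\times{\tilde\gamma}'}{\tilde\gamma}|^2$ vanishes like $\cos^2\psi$) that makes this so.
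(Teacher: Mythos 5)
You follow the paper's framework (stabilization scheme, integration by parts, then Section~\ref{sec:computD}'s remainder-term lemmas), and your frame $\{T,N,B\}$ viewpoint is a sensible repackaging of the paper's coordinate computations. But there is a genuine gap in the list of functional identities you intend to use, and one of your attributions of where the $(\lambda-1)$ term comes from is wrong.

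You state only two ODE relations, $f_\lambda''+f_\lambda=g_\lambda$ and the expression for $a_\lambda'/a_\lambda$, and claim the $a_\lambda$-ODE alone ``pins down the Gaussian weight and the power of $|{\tilde\gamma}|$ so that the tangential contribution disappears,'' and that $f_\lambda''+f_\lambda=g_\lambda$ ``delivers the coefficient $\tfrac14+(\lambda-1)\kappa^2/(|{\tilde\gamma}|^2\cos^2\psi)$.'' Neither claim is correct as stated. Inspecting the paper's formula~(\ref{deltamek}), the coefficient of $\xi\cdot\partial_\tau{\tilde\gamma}$ in $\mathfrak{D}_1$ is
\[
\frac{1}{|\xi|}\Big(\frac{a'(|\xi|)}{a(|\xi|)}+\frac{1}{|\xi|}\Big)m(\psi)+\Big(\frac12-\frac{1}{|\xi|^2}\Big)g(\psi),\qquad m(\psi)=f(\psi)-f'(\psi)\tan\psi,
\]
and since $|\xi|$ and $\psi$ are independent, making this vanish forces the \emph{separate} constraint $m(\psi)=\lambda g(\psi)$ for a constant $\lambda$, and only then does the $a_\lambda$-ODE close the $\xi$-term. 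This is the paper's condition~(\ref{fcond2}); you never state it. Likewise, by~(\ref{deltaerku}) the $B$-component remainder $\mathfrak{D}_2$ carries the factor $m(\psi)-g(\psi)$, so the coefficient $(\lambda-1)$ comes from $m_\lambda-g_\lambda=(\lambda-1)g_\lambda$, i.e.\ again from $m_\lambda=\lambda g_\lambda$, not from $f_\lambda''+f_\lambda=g_\lambda$; the latter only ensures that $D^2_\eta F$ and $\rho|\eta|^{-1}D^2|\eta|$ agree on $\xi$, so that the rank-one defect is along $\xi\times\eta$ (Lemma~\ref{lem:D2Frho}). Your parenthetical comparison with~(\ref{cosformul2}) is also misleading: there $f=\sin\psi$ gives $m\equiv 0$, so \emph{both} terms of~(\ref{deltamek}) vanish for any $a$, which is a different mechanism from the one at work in Theorem~\ref{thm1}, where $m_\lambda=\lambda g_\lambda\neq 0$ and a balance with the $a_\lambda$-ODE is essential. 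Finally, the perfect-square completion is a purely algebraic identity~(\ref{plusminus}) in the orthonormal frame $\{\eta/|\eta|,\nu,\nu\times\eta/|\eta|\}$ and is not a consequence of $f''+f=g$ either. To make your argument complete you should verify (by direct differentiation of~(\ref{flamb})) that indeed $f_\lambda-f_\lambda'\tan\psi=\lambda(\cos\psi)^{(1-\lambda)/\lambda}=\lambda g_\lambda$, and then separate the three roles: (i) $f''+f=g$ controls the structure of $D^2_\eta F-\rho|\eta|^{-1}D^2|\eta|$, (ii) $m=\lambda g$ together with the $a_\lambda$-ODE kills the $\xi\cdot\partial_\tau{\tilde\gamma}$ part of $\mathfrak{D}_1$ and sets $m-g=(\lambda-1)g$ in $\mathfrak{D}_2$, and (iii) the algebraic identity~(\ref{plusminus}) produces the perfect square.
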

The graphs of the functions $f_\lambda(\psi)$ for several values of $\lambda$ are displayed in Figure~1. Observe that $f_\lambda(0)=\lambda$.
\begin{figure}[h!]\label{fig:1}
\begin{center}
\includegraphics[scale=0.25]{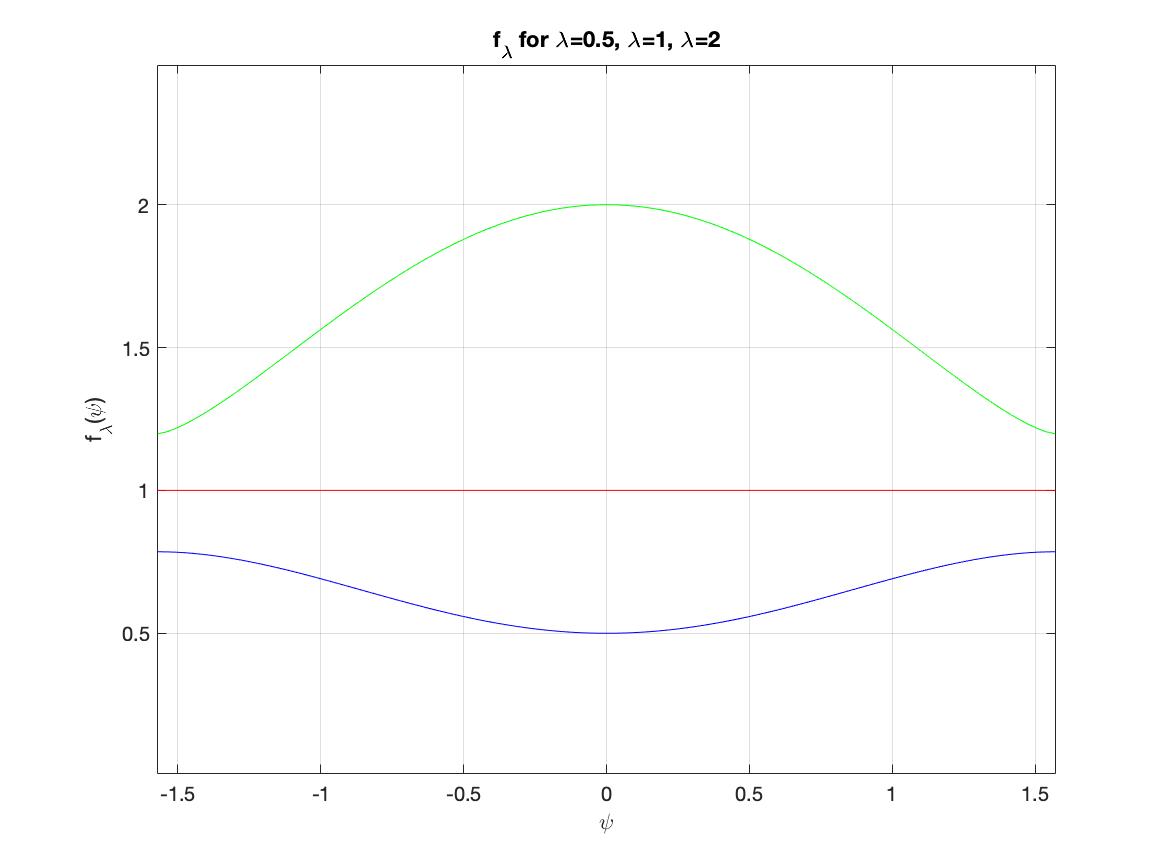}\label{fig-nc}
\caption{Functions $f_\lambda$ for $\lambda =0.5, \,\,\,1, \text{ and }\,2.$}
\end{center}
\end{figure}
\begin{rem}
For $\lambda=1$ the equation 
 (\ref{monformHG}) turns into the monotonicity formula of G. Huisken \cite{Hu} (more details in Section \ref{sec:Hu})
 \begin{equation}\label{monformHu}
\frac{d}{d\tau}\int_{S^1} |{\tilde\gamma}'|e^{-\frac{|{\tilde\gamma}|^2}{4}}dx=
-\int_{S^1}
\left(
\frac{1}{4}|\textup{Proj}_{\nu\times{\tilde\gamma}'} {\tilde\gamma}|^2
+\left| \kappa+\frac{1}{2}\langle{\tilde\gamma},\nu\rangle\right|^2
\right) |{\tilde\gamma}'|e^{-\frac{|{\tilde\gamma}|^2}{4}}
dx.
\end{equation}  
\end{rem}

\vspace{3mm}

In \cite{M}, \cite{M-cor} the version of the following formula has been derived for plane curves, 
which in $\R^2$ happens to be a monotonicity
formula. Here we generalize it in $\R^3$.

\vspace{1mm}

\begin{thm}\label{thm4}
Let ${\tilde\gamma}$ be the rescaled curve shortening flow in (\ref{main}), and let
\begin{equation}\label{noturnpsi}
\psi=\arcsin \frac{\langle {\tilde\gamma},{\tilde\gamma}' \rangle}{|{\tilde\gamma}||{\tilde\gamma}'|}\not=\pm\frac{\pi}{2}.
\end{equation}
Then 
\begin{multline}\label{oldnewmonform}
\frac{d}{d\tau}\int_{S^1}F({\tilde\gamma},{\tilde\gamma}')dx=
-\int_{S^1}
\left| \kappa+\frac{1}{2}\langle{\tilde\gamma},\nu\rangle\right|^2
\rho({\tilde\gamma},{\tilde\gamma}')
dx\\
-\int_{S^1}
\left[\frac{1}{4}-\Big( 1+|{\tilde\gamma}|b(|{\tilde\gamma}|)-\log\cos\psi\Big)
\frac{\kappa^2 }{|{\tilde\gamma}|^2\cos^2\psi} \right]
\big|\textup{Proj}_{\nu\times{\tilde\gamma}'} {\tilde\gamma}\big|^2 
\rho({\tilde\gamma},{\tilde\gamma}')
dx,
\end{multline}
where 
$$
F(\xi,\eta)=\frac{|\eta|}{|\xi|}\Big(h(\psi)-|\xi|b(|\xi|)\cos\psi\Big),
\,\,\,\rho(\xi,\eta)=\frac{|\eta|}{|\xi|}\frac{1}{\cos\psi},
$$

$$
h(\psi)=\psi\sin\psi+\cos\psi\log\cos\psi
$$ 
and 
$$
b(r)=\frac{r}{4}-\frac{\log r}{r}-\frac{1-\log 2}{2r}.
$$
\end{thm}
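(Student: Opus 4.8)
The plan is to follow the same stabilization scheme that was used for Huisken's formula in Section~\ref{sec:Hu} and for Theorem~\ref{thm1}. First I would reparametrize the curve by a fixed parameter (arc length of a reference curve, or simply keep $x\in S^1$) and treat the flow (\ref{main}) as a scalar parabolic problem for the relevant geometric quantities. The object to differentiate in time is $\int_{S^1}F({\tilde\gamma},{\tilde\gamma}')\,dx$ with $F(\xi,\eta)=\frac{|\eta|}{|\xi|}\big(h(\psi)-|\xi|b(|\xi|)\cos\psi\big)$. Carrying $\partial_\tau$ inside the integral produces terms involving $\partial_\tau|{\tilde\gamma}|$, $\partial_\tau|{\tilde\gamma}'|$ and $\partial_\tau\psi$, each of which must be rewritten using (\ref{main}); the key algebraic identities are that $\partial_\tau{\tilde\gamma}=\tfrac12{\tilde\gamma}+\kappa\nu$ (after rescaling) and that $\nu$, ${\tilde\gamma}'/|{\tilde\gamma}'|$, $\nu\times{\tilde\gamma}'/|{\tilde\gamma}'|$ form an orthonormal frame, so that $\langle{\tilde\gamma},\cdot\rangle$ decomposes along these three directions with the $\nu\times{\tilde\gamma}'$ component being exactly $\textup{Proj}_{\nu\times{\tilde\gamma}'}{\tilde\gamma}$.

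The heart of the matter is the integration-by-parts step à la Zelenyak: after the time derivative is expanded, one spatial derivative will appear linearly, and it should be integrated by parts on $S^1$ (no boundary terms, since the curve is closed and, by the hypothesis (\ref{noturnpsi}), $\cos\psi$ never vanishes so $\rho$ and $F$ are smooth). The design of $h$, $b$, and the weight $\rho(\xi,\eta)=\frac{|\eta|}{|\xi|\cos\psi}$ is precisely what makes this work: the functions $h(\psi)=\psi\sin\psi+\cos\psi\log\cos\psi$ and $b(r)=\frac{r}{4}-\frac{\log r}{r}-\frac{1-\log 2}{2r}$ should satisfy the ODEs that cause all the ``wrong-sign'' or non-square terms to cancel, leaving only the completed square $|\kappa+\tfrac12\langle{\tilde\gamma},\nu\rangle|^2$ and the $|\textup{Proj}_{\nu\times{\tilde\gamma}'}{\tilde\gamma}|^2$ term with the bracketed coefficient $\tfrac14-\big(1+|{\tilde\gamma}|b(|{\tilde\gamma}|)-\log\cos\psi\big)\frac{\kappa^2}{|{\tilde\gamma}|^2\cos^2\psi}$. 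In practice I would not recompute this from scratch: I expect the paper's Section~\ref{sec:computD} to isolate a ``remainder term'' $D$ depending on $F$, $\rho$ and a few structural functions, so that (\ref{oldnewmonform}) follows by plugging the specific $h$, $b$, $\rho$ into that master formula and verifying the relevant ODE identities (for instance that $h''(\psi)+h(\psi)=\log\cos\psi+\text{const}$, and a matching first-order relation for $b$ in the radial variable arising from the $e^{-r^2/4}$-type weight structure, here replaced by the logarithmic primitive defining $b$).

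The main obstacle is the bookkeeping in that integration-by-parts/remainder computation: one must track how $\partial_x\psi$, $\partial_x|{\tilde\gamma}|$, and the second-order term $\kappa$ interact when $F$ depends on $\psi$ both through $h(\psi)$ and through the product $|\xi|b(|\xi|)\cos\psi$, so the radial and angular pieces are genuinely coupled (unlike the purely angular $f_\lambda$ in Theorem~\ref{thm1}). The logarithmic term $\log\cos\psi$ appearing inside the final coefficient is the signature that the primitive $\int^\psi$ in $h$ is of $\tan$/$\log\cos$ type rather than the power-law $(\cos t)^{1/\lambda}$ of (\ref{flamb}); checking that this term emerges with the correct sign and the correct companion $1+|{\tilde\gamma}|b(|{\tilde\gamma}|)$ is the delicate point. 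Once the master remainder identity from Section~\ref{sec:computD} is in hand, the rest is substitution and simplification, and the closedness of the curve together with (\ref{noturnpsi}) guarantees all manipulations are legitimate.
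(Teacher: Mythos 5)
Your high-level strategy is the same as the paper's: invoke the master remainder formulas $\mathfrak{D}_1$, $\mathfrak{D}_2$ from Section~\ref{sec:computD}, plug in the particular $F$ and $\rho$, verify the relevant ODE identities, and complete the square using (\ref{plusminus}). However, two concrete points in your proposal are off the mark, and without them the computation would not close up.

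First, the decisive observation you do not articulate is that the paper's Lemmas give $\mathfrak{D}_1$, $\mathfrak{D}_2$ only for a \emph{single} product $a(|\xi|)\,|\eta|\,f(\psi)$, whereas here $F$ is a \emph{difference} of two such products: $F=a_1(|\xi|)|\eta|f_1(\psi)-a_2(|\xi|)|\eta|f_2(\psi)$ with $a_1(r)=r^{-1}$, $f_1=h$, $a_2=b$, $f_2=\cos\psi$. The proof proceeds by linearity of the lemmas over this splitting; your remark that ``the radial and angular pieces are genuinely coupled'' is actually backwards, since the whole point of the paper's choice is that each piece is a clean tensor product for which the lemmas apply separately. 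Crucial to this is that $f_2''+f_2=0$, so $g_2\equiv0$ and the $b$-piece contributes nothing to the weight $\rho$, while the two radial identities $a_1'/a_1+1/r=0$ and $a_2'+a_2/r=\tfrac12-1/r^2$ cause the $\xi\cdot\partial_\tau\tilde\gamma$ terms in $\mathfrak{D}_1$ to cancel.

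Second, the ODE you guess for $h$ is wrong: one has $h''(\psi)+h(\psi)=\tfrac{1}{\cos\psi}$, not $\log\cos\psi+\text{const}$. This is essential, since $g_1=h''+h=\tfrac{1}{\cos\psi}$ is exactly what forces $\rho=a_1(|\xi|)|\eta|g_1(\psi)=\tfrac{|\eta|}{|\xi|\cos\psi}$, matching the weight stated in the theorem. The $\log\cos\psi$ you expected in that ODE actually enters via the companion function $m_1(\psi)=h(\psi)-h'(\psi)\tan\psi=\tfrac{\log\cos\psi}{\cos\psi}$ from Lemma~\ref{lem:D2Frho}, which together with $m_2(\psi)=1/\cos\psi$ assembles the bracket $1+|\tilde\gamma|b(|\tilde\gamma|)-\log\cos\psi$ in $\mathfrak{D}_2$. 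With the correct ODE identities and the linear decomposition in hand, the rest of your outline is fine.
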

The graphs of the functions $f(\psi)$ and $rb(r)$ are displayed in Figure~2.
\begin{figure}[h!]\label{fig:2}
\begin{center}
\includegraphics[scale=0.34]{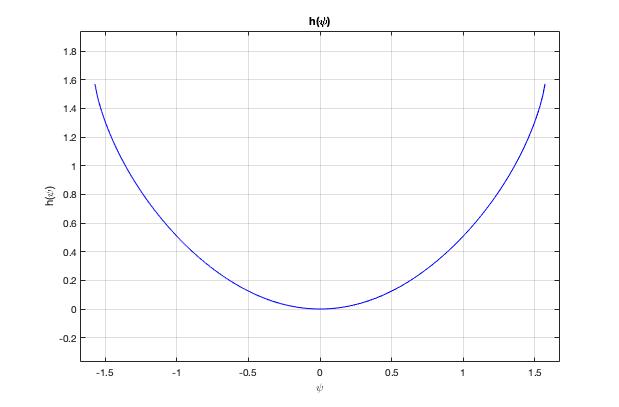}\label{fig-nc}
\includegraphics[scale=0.34]{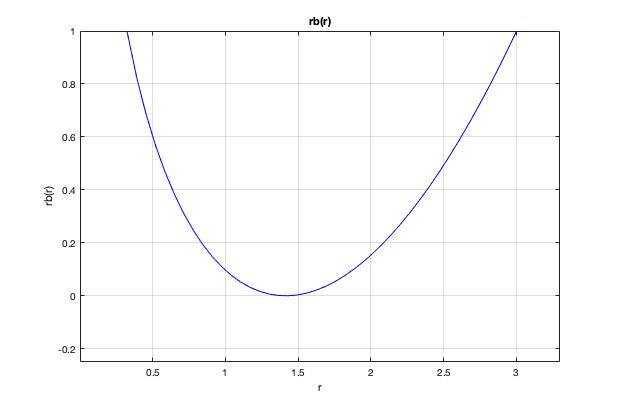}\label{fig-nc}

\caption{Functions $h(\psi)$ and $rb(r)$.}
\end{center}
\end{figure}
\begin{rem}
Observe that both $h(\psi)$ and $rb(r)$ (see Figure 2) are non-negative convex functions, and achieve their minimum value zero at 
$\psi=0$ and $r=\sqrt{2}$ respectively, which correspond to the plane circle of radius $\sqrt{2}$, i.e., 
the stable stationary plane solution of (\ref{main}). 

Moreover, for the plane circle of radius $\sqrt{2}$
in
the second term of (\ref{oldnewmonform}) 
not only $\big|\textup{Proj}_{\nu\times{\tilde\gamma}'} {\tilde\gamma}\big|$ vanishes, but also the expression
$$
\left[\frac{1}{4}-\Big( 1+|{\tilde\gamma}|b(|{\tilde\gamma}|)-\log\cos\psi\Big)
\frac{\kappa^2 }{|{\tilde\gamma}|^2\cos^2\psi} \right]
$$
does.
\end{rem}
The method of the proof in \cite{M} is interesting because it allows 
one to derive the monotonicity formula ``from nowhere'' in $\R^2$. 
The same approach would fail in $\R^3$, but one can generalize some computations from \cite{M} to $\R^3$ 
for a special class of functions, and obtain new monotonicity 
formulas. This is what we do in the next two sections.



\section{The stabilization technique}\label{stabtech}
For the system (\ref{main}) we look for functions $F(\xi, \eta)$ and $\rho(\xi, \eta)$, $\xi,\eta\in \R^3$, 
to make the following monotonicity relation possible
\begin{equation}\label{monform}
\frac{d}{d\tau}\int_{S^1} F(v_1,v_2,v_3,v_1',v_2',v_3')dx=
-\int_{S^1}|\partial_\tau \gamma|^2\rho(v_1,v_2,v_3,v_1',v_2',v_3')dx+\mathfrak{D}(\tau),
\end{equation}
where $\tilde{\gamma}(\tau,x)=\left(\begin{array}{cc} v_1(\tau,x) \\ v_2(\tau,x)\\ v_3(\tau,x)
\end{array}\right)$ and $\mathfrak{D}$ has a geometric meaning.

Differentiatng the left hand side of (\ref{monform}) and integrating by parts we
obtain under the integral
\begin{align}\label{tau2}
\partial_\tau v_1\left[\frac{\partial F}{\partial\xi_1}-
\frac{\partial^2 F}{\partial\xi_1\partial\eta_1}v_1'-
\frac{\partial^2 F}{\partial\xi_2\partial\eta_1}v_2'-
\frac{\partial^2 F}{\partial\xi_3\partial\eta_1}v_3'{\color{blue}-
\frac{\partial^2 F}{\partial\eta_1^2}v_1''-
\frac{\partial^2 F}{\partial\eta_1\partial\eta_2}v_2''-
\frac{\partial^2 F}{\partial\eta_1\partial\eta_3}v_3''}
\right]\,\,\, 
\\
+
\partial_\tau v_2\left[\frac{\partial F}{\partial\xi_2}-
\frac{\partial^2 F}{\partial\xi_1\partial\eta_2}v_1'-
\frac{\partial^2 F}{\partial\xi_2\partial\eta_2}v_2'-
\frac{\partial^2 F}{\partial\xi_3\partial\eta_2}v_3'{\color{blue}-
\frac{\partial^2 F}{\partial\eta_1\partial\eta_2}v_1''-
\frac{\partial^2 F}{\partial\eta_2^2}v_2''-
\frac{\partial^2 F}{\partial\eta_2\partial\eta_3}v_3''}
\right]\,\,\, \nonumber
\\ 
+
\partial_\tau v_3\left[\frac{\partial F}{\partial\xi_3}-
\frac{\partial^2 F}{\partial\xi_1\partial\eta_3}v_1'-
\frac{\partial^2 F}{\partial\xi_2\partial\eta_3}v_2'-
\frac{\partial^2 F}{\partial\xi_3\partial\eta_3}v_3'{\color{blue}-
\frac{\partial^2 F}{\partial\eta_1\partial\eta_3}v_1''-
\frac{\partial^2 F}{\partial\eta_2\partial\eta_3}v_2''-
\frac{\partial^2 F}{\partial\eta_3^2}v_3''}
\right]. \nonumber
\end{align}

In the first entry of the right hand side of (\ref{monform}) using (\ref{main}) we obtain under the integral
\begin{align} |\partial_\tau{\tilde \gamma}|^2=
\left(
\begin{array}{cc} \partial_\tau v_1\\\partial_\tau v_2\\\partial_\tau v_3
\end{array}
\right)
\left(  \frac{1}{2}{\tilde \gamma}+
\frac{{\tilde \gamma}'\times ({\tilde \gamma}''\times{\tilde \gamma}')}{|{\tilde \gamma}'|^4} \right)=
\\ 
\frac{1}{2}
\left(
\begin{array}{cc} \partial_\tau v_1\\\partial_\tau v_2\\\partial_\tau v_3
\end{array}
\right)\cdot
{\tilde \gamma}
+{\color{blue}
\left(
\begin{array}{cc} \partial_\tau v_1\\\partial_\tau v_2\\\partial_\tau v_3
\end{array}
\right)\cdot
\frac{{\tilde \gamma}'\times ({\tilde \gamma}''\times{\tilde \gamma}')}{|{\tilde \gamma}'|^4}}.\label{tau3}
\end{align}
Observe that
$$
\frac{{\tilde \gamma}'\times ({\tilde \gamma}''\times{\tilde \gamma}')}{|{\tilde \gamma}'|^4}=
\frac{1}{|{\tilde \gamma}'|^4}
\left(
\begin{array}{cc}
({v'}_2^2+{v'}_3^2)v_1''-v_1'v_2'v_2''-v_1'v_3'v_3''
\\
-v_1'v_2'v_1''+({v'}_3^2+{v'}_1^2)v_2''-v_2'v_3'v_3''
\\
-v_1'v_3'v_1''-v_2'v_3'v_2''+({v'}_1^2+{v'}_2^2)v_3''
\end{array}
\right)
$$
and
$$
D^2|\eta|=
|\eta|^{-3}\left(
\begin{array}{ccc}
(\eta_2^2+\eta_3^2) & -\eta_1\eta_2 & -\eta_1\eta_3
\\
-\eta_1\eta_2 & (\eta_1^2+\eta_3^2) & -\eta_2\eta_3
\\
-\eta_1\eta_3 & -\eta_2\eta_3 & (\eta_1^2+\eta_2^2)
\end{array}
\right)=|\eta|^{-3}\left( |\eta|^2 I -(\eta_i\eta_j)_{i,j}  \right).
$$
To analyse the terms containing second order derivatives in equations (\ref{tau2}) and (\ref{tau3}) we will study the action of the matrices
\begin{equation}\label{Frho}
D^2_{\eta} F(\xi, \eta)\,\,\, \text{ and }\,\,\, \rho(\xi,\eta)|\eta|^{-1}D^2|\eta|
\end{equation}
on the vector ${\tilde\gamma}''$. In the case of Huisken's monotonicity formula the function $F$ 
depends only on $|\xi|$, $|\eta|$ and two matrices coincide (see Section \ref{sec:Hu}). 
In the case of the new monotonicity formulas their difference will have rank one and 
in Section \ref{sec:d2} we will show this. 

Let us now take 
$$
\mathfrak{D}(\tau)=\int_{S^1}
\mathfrak{D}_1+
\mathfrak{D}_2dx,
$$
where $\mathfrak{D}(\tau)$ is defined by (\ref{monform}), and in $\mathfrak{D}_2$ we collect the terms containing ${\tilde\gamma}''$
\begin{equation}\label{defD2}
\mathfrak{D}_2=\partial_\tau{\tilde\gamma}
\Big[
 \rho(\xi,\eta)|\eta|^{-1}D^2|\eta|-
D^2_\eta F(\xi, \eta)
\Big]
{\tilde\gamma}'',
\end{equation}
and in  $\mathfrak{D}_1$ the remaining terms
\begin{align}\label{eqFrho}
\mathfrak{D}_1=
\partial_\tau v_1\left[\frac{\partial F}{\partial\xi_1}-
\frac{\partial^2 F}{\partial\xi_1\partial\eta_1}v_1'-
\frac{\partial^2 F}{\partial\xi_2\partial\eta_1}v_2'-
\frac{\partial^2 F}{\partial\xi_3\partial\eta_1}v_3'
\right]\\
+\partial_\tau v_2\left[\frac{\partial F}{\partial\xi_2}-
\frac{\partial^2 F}{\partial\xi_1\partial\eta_2}v_1'-
\frac{\partial^2 F}{\partial\xi_2\partial\eta_2}v_2'-
\frac{\partial^2 F}{\partial\xi_3\partial\eta_2}v_3'
\right] \nonumber
\\
+\partial_\tau v_3\left[\frac{\partial F}{\partial\xi_3}-
\frac{\partial^2 F}{\partial\xi_1\partial\eta_3}v_1'-
\frac{\partial^2 F}{\partial\xi_2\partial\eta_3}v_2'-
\frac{\partial^2 F}{\partial\xi_3\partial\eta_3}v_3'
\right] \nonumber
\\
+\rho(\xi,\eta)
\left(
\begin{array}{cc} \partial_\tau v_1\\\partial_\tau v_2\\\partial_\tau v_3
\end{array}
\right) \cdot  \frac{1}{2}{\tilde \gamma}. \nonumber
\end{align}

In Section \ref{sec:computD} we will compute $\mathfrak{D}_1$ and $\mathfrak{D}_2$
for a special class of functions $F$ and $\rho$ and will then derive several new monotonicity formulas
in Section \ref{sec:newmon}.





\section{Huisken's monotonicity formula}\label{sec:Hu}

As an intermediate step let us verify Huisken's monotonicity formula in our setting. If we take
$$
F(\xi,\eta)=\rho(\xi,\eta)=|\eta|e^{-\frac{|\xi|^2}{4}}
$$
then the matrices in (\ref{Frho}) will coincide, implying that $\mathfrak{D}_2\equiv0$.

Further observe that
 \begin{align}
\left(
\begin{array}{cc}
\frac{\partial F}{\partial\xi_1}-
\frac{\partial^2 F}{\partial\xi_1\partial\eta_1}\eta_1-
\frac{\partial^2 F}{\partial\xi_2\partial\eta_1}\eta_2-
\frac{\partial^2 F}{\partial\xi_3\partial\eta_1}\eta_3
\\
\frac{\partial F}{\partial\xi_2}-
\frac{\partial^2 F}{\partial\xi_1\partial\eta_2}\eta_1-
\frac{\partial^2 F}{\partial\xi_2\partial\eta_2}\eta_2-
\frac{\partial^2 F}{\partial\xi_3\partial\eta_2}\eta_3
\\
\frac{\partial F}{\partial\xi_3}-
\frac{\partial^2 F}{\partial\xi_1\partial\eta_3}\eta_1-
\frac{\partial^2 F}{\partial\xi_2\partial\eta_3}\eta_2-
\frac{\partial^2 F}{\partial\xi_3\partial\eta_3}\eta_3
\end{array}
\right)=-\frac{1}{2}|\eta|e^{-\frac{|\xi|^2}{4}}\left[ \xi -\frac{\langle\xi, \eta \rangle}{|\eta|^2}\eta\right],
\end{align}
and thus
\begin{align} \mathfrak{D}_1=
\left(
\begin{array}{cc} \partial_\tau v_1\\\partial_\tau v_2\\\partial_\tau v_3
\end{array}
\right)
 \left[
\left(
\begin{array}{cc}
\frac{\partial F}{\partial\xi_1}-
\frac{\partial^2 F}{\partial\xi_1\partial\eta_1}\eta_1-
\frac{\partial^2 F}{\partial\xi_2\partial\eta_1}\eta_2-
\frac{\partial^2 F}{\partial\xi_3\partial\eta_1}\eta_3
\\
\frac{\partial F}{\partial\xi_2}-
\frac{\partial^2 F}{\partial\xi_1\partial\eta_2}\eta_1-
\frac{\partial^2 F}{\partial\xi_2\partial\eta_2}\eta_2-
\frac{\partial^2 F}{\partial\xi_3\partial\eta_2}\eta_3
\\
\frac{\partial F}{\partial\xi_3}-
\frac{\partial^2 F}{\partial\xi_1\partial\eta_3}\eta_1-
\frac{\partial^2 F}{\partial\xi_2\partial\eta_3}\eta_2-
\frac{\partial^2 F}{\partial\xi_3\partial\eta_3}\eta_3
\end{array}
\right)+
 \frac{1}{2}\rho(\xi,\eta)\xi\right] \\
= \rho(\xi,\eta)\left(\frac{1}{2}\xi+\kappa\nu\right)\frac{\xta}{2|\eta|^2}\eta=
\frac{\xta^2}{4|\eta|^2}\rho(\xi,\eta)=\frac{1}{4}\big|\textup{Proj}_\eta \xi\big|^2\rho(\xi,\eta).\nonumber
\end{align}
We have obtained 
\begin{equation}
\frac{d}{d\tau}\int_{S^1} F(v_1,v_2,v_3,v_1',v_2',v_3')dx=
-\int_{S^1}\left(|\partial_\tau {\tilde \gamma}|^2-\frac{1}{4}\big|\textup{Proj}_\eta  {\tilde \gamma} \big|^2\right)\rho(v_1,v_2,v_3,v_1',v_2',v_3')dx.
\end{equation}
On the other hand
\begin{multline}\label{plusminus}
\left|\frac{1}{2}\xi+\kappa\nu\right|^2-\frac{1}{4}\big|\textup{Proj}_\eta \xi\big|^2=\\
\frac{1}{4}\left(\big|\textup{Proj}_\nu \xi\big|^2+
\big|\textup{Proj}_\eta \xi\big|^2+
\big|\textup{Proj}_{\nu\times\eta} \xi\big|^2\right)
+\kappa\langle\xi,\nu\rangle+\kappa^2-\frac{1}{4}\big|\textup{Proj}_\eta \xi\big|^2=\\
\frac{1}{4}\big|\textup{Proj}_{\nu\times\eta} \xi\big|^2+\left| \kappa+\frac{1}{2}\langle\xi,\nu\rangle\right|^2,
\end{multline}
implying Huisken's monotonicity formula (\ref{monformHu}) for the rescaled curve shortening flow in $\R^3$.

\begin{rem}
It has been shown in \cite{M} that using the stabilization technique one can not only verify but actually also re-discover
Huisken's monotonicity formula in $\R^2$. 
\end{rem}







\section{Computations of $\mathfrak{D}$ for a special class of functions $F$ and $\rho$} \label{sec:computD}

In the case of the Huisken's formula functions $F$ and
$\rho$ depend only on absolute values of $\xi$ and $\eta$. Generalizing the approach developed 
in \cite{M} for plane curves we are looking for formulas, which depend on 
the angle between $\xi$ and $\eta$. 

Taking into account the matrices (\ref{Frho}) we look for the functions $F$ and $\rho$
of the particular form
$$
F(\xi,\eta)=a(|\xi|)|\eta| f(\psi)
$$ 
and 
$$
\rho(\xi,\eta)=a(|\xi|)|\eta| g(\psi),
$$
with 
$$
\psi=\arcsin \frac{\langle\xi,\eta\rangle}{|\xi| |\eta|}
$$ 
being the angle between the position vector 
$\xi=\tilde{\gamma}$ and the plane orthogonal to the tangent $\eta=\tilde{\gamma}'$.
The functions $f(\psi)$, $g(\psi)$ and $a(r)$ will be specified in the upcoming sections.






\subsection{Computing $\mathfrak{D}_2$}\label{sec:d2}


\begin{lemma}\label{lem:D2Frho}
If
$$
f''+f=g
$$
then
$$
D^2_\eta F\,  \eta=\rho|\eta|^{-1}D^2_\eta|\eta|\, \eta=\textbf{0},
$$
$$
(D^2_\eta F-\rho|\eta|^{-1}D^2_\eta|\eta|) \, \xi=\textbf{0},
$$
and 
$$
(D^2_\eta F-\rho|\eta|^{-1}D^2_\eta|\eta|) (\xi\times\eta)=a(|\xi|)\frac{m(\psi)-g(\psi)}{|\eta|}(\xi\times\eta),
$$
where 
$$
m(\psi)=f(\psi)-f'(\psi)\tan\psi.
$$
\end{lemma}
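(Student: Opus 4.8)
The plan is to compute the three matrices appearing in the statement by working in a convenient orthonormal frame and exploiting the algebraic structure of the two Hessians. Write $r=|\xi|$, $s=|\eta|$, and recall $\psi=\arcsin\frac{\langle\xi,\eta\rangle}{rs}$, so that $\langle\xi,\eta\rangle = rs\sin\psi$. The key observation is that $F(\xi,\eta)=a(r)\,s\,f(\psi)$ depends on $\eta$ only through $s=|\eta|$ and through $u:=\frac{\langle\xi,\eta\rangle}{s}=r\sin\psi$ (for fixed $\xi$). Thus $\nabla_\eta F$ and $D^2_\eta F$ can be expanded in terms of $\nabla_\eta s = \eta/s$ and $\nabla_\eta u = \frac{\xi}{s}-\frac{\langle\xi,\eta\rangle}{s^3}\eta = \frac{1}{s}\,\textup{Proj}_{\eta^\perp}\xi$; note these two gradient vectors are orthogonal, and $\nabla_\eta u$ is also orthogonal to $\xi\times\eta$. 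Expanding $D^2_\eta|\eta| = \frac{1}{s}(I - \frac{\eta\eta^T}{s^2}) = \frac{1}{s}\textup{Proj}_{\eta^\perp}$, one sees $\rho\,|\eta|^{-1}D^2_\eta|\eta| = \frac{a(r)g(\psi)}{s}\,\textup{Proj}_{\eta^\perp}$, which kills $\eta$, fixes $\xi$ up to scaling along $\textup{Proj}_{\eta^\perp}\xi$, and acts on $\xi\times\eta$ (which lies in $\eta^\perp$) as multiplication by $\frac{a(r)g(\psi)}{s}$.

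Next I would compute $D^2_\eta F$ directly. Using the chain rule twice on $F = a(r)\,s\,f(\psi)$ with $\psi=\psi(s,u)$ and $u=u(\eta)$, the Hessian is a linear combination of $\eta\eta^T$, the symmetric product $\eta(\textup{Proj}_{\eta^\perp}\xi)^T + (\textup{Proj}_{\eta^\perp}\xi)\eta^T$, $(\textup{Proj}_{\eta^\perp}\xi)(\textup{Proj}_{\eta^\perp}\xi)^T$, plus terms proportional to $I$ and to $\xi\xi^T$ coming from the Hessians of $s$ and $u$ in $\eta$. Applying this matrix to $\eta$: all the $\textup{Proj}_{\eta^\perp}\xi$-directed pieces contribute terms along $\eta$ or vanish, and a short computation using $f''+f=g$ should collapse everything to zero — this is exactly the ODE condition's role, matching the coefficient of the $I$-term (which is $\propto f$) against the second-derivative-in-$\psi$ term (which is $\propto f''$). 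Similarly $D^2_\eta F\,\xi$ and $\rho|\eta|^{-1}D^2_\eta|\eta|\,\xi$ should each reduce, modulo the common $\textup{Proj}_{\eta^\perp}\xi$ and $\eta$ directions, to expressions that agree precisely when $f''+f=g$, giving the second displayed identity. For the third identity, apply both matrices to $w:=\xi\times\eta$. Since $w\perp\eta$ and $w\perp\textup{Proj}_{\eta^\perp}\xi$ (as $w\perp\xi$), the rank-one and $\eta\eta^T$-type terms annihilate $w$, and only the $I$-term of $D^2_\eta F$ and the $\textup{Proj}_{\eta^\perp}$ part of the other matrix survive: $D^2_\eta F\, w = a(r)\frac{c(\psi)}{s} w$ for some scalar $c(\psi)$ built from $f$, $f'$, and $\tan\psi$, while $\rho|\eta|^{-1}D^2_\eta|\eta|\,w = a(r)\frac{g(\psi)}{s}w$. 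Identifying $c(\psi)=f(\psi)-f'(\psi)\tan\psi=m(\psi)$ then yields the claimed formula.

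The main obstacle is bookkeeping in the second-order chain rule: $D^2_\eta F$ has genuinely many terms because $\psi$ depends on $\eta$ both through $|\eta|$ in the denominator and through $\langle\xi,\eta\rangle$, so $\partial\psi/\partial\eta$ is already a nontrivial combination, and its $\eta$-derivative produces cross terms. The cleanest route is to introduce the orthonormal frame $\{e_1,e_2,e_3\}$ with $e_1 = \eta/s$, $e_2 = \textup{Proj}_{\eta^\perp}\xi \,/\,|\textup{Proj}_{\eta^\perp}\xi|$ (so $|\textup{Proj}_{\eta^\perp}\xi| = r\cos\psi$), and $e_3 = e_1\times e_2$, which is parallel to $w/|w|$. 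In this frame both matrices are diagonal-plus-rank-one in the $e_1,e_2$ block and act as scalars on $e_3$, so the three claims become: (i) the $e_1$-column of each matrix vanishes after using $f''+f=g$; (ii) the $e_2$-columns of the two matrices agree under the same hypothesis; (iii) the $e_3$-eigenvalues are $a(r)m(\psi)/s$ and $a(r)g(\psi)/s$ respectively. I expect (i) and (ii) to follow from the same one-line identity $f''+f=g$ once the coefficients are written out, and (iii) to be an essentially direct computation of the $(e_3,e_3)$ entry of $D^2_\eta F$, where only the trace-like contribution $s\,a(r)f(\psi)\cdot\frac{1}{s}$ from differentiating $|\eta|$ and the $\psi$-derivative contributions of order $f'$ appear — and the latter assemble into exactly $-f'(\psi)\tan\psi$ after dividing out the common factors.
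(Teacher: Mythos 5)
Your plan is correct and the details check out, but the route is genuinely different from the paper's. The paper computes $\partial^2_{\eta_i\eta_j}\psi$ and then $\partial^2_{\eta_i\eta_j}\bigl(|\eta|f(\psi)\bigr)$ explicitly in Cartesian coordinates via the auxiliary quantity $A(\xi,\eta)=|\xi|^2|\eta|^2-\langle\xi,\eta\rangle^2$, and then applies the resulting matrix to each of $\eta$, $\xi$, $\xi\times\eta$, cancelling by hand. You instead pass to the adapted orthonormal frame $e_1=\eta/|\eta|$, $e_2=\textup{Proj}_{\eta^\perp}\xi/|\textup{Proj}_{\eta^\perp}\xi|$, $e_3=e_1\times e_2$, in which $D^2_\eta F$ is diagonal (the $e_3$-direction decouples because $F$ is even in $e_3$, and the $e_1$-column vanishes), reducing the lemma to three one-dimensional second derivatives of $t\mapsto F(\xi,\eta+te_k)$. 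This is cleaner and more conceptual; carrying it out one finds $\Psi'(0)=1/s,\ \Psi''(0)=0$ along $e_2$ giving the $(e_2,e_2)$-entry $\tfrac{a}{s}(f+f'')=\tfrac{a}{s}g$, and $\Psi'(0)=0,\ \Psi''(0)=-\tan\psi/s^2$ along $e_3$ giving the $(e_3,e_3)$-entry $\tfrac{a}{s}(f-f'\tan\psi)=\tfrac{a}{s}m$, exactly as needed. One small correction to your framing: the vanishing of $D^2_\eta F\,\eta$ does not use $f''+f=g$ at all --- it is automatic from the degree-one homogeneity of $F$ in $\eta$ (since $\psi$ depends only on the direction of $\eta$), just as $D^2|\eta|\,\eta=0$; the ODE condition is what makes the $(e_2,e_2)$-entries of the two matrices agree, i.e.\ it enters only in the second identity. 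With that attribution fixed, your sketch fills in to a complete and arguably tidier proof than the one in the paper.
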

\begin{proof}
Let us take
$$
A(\xi,\eta)=|\xi|^2|\eta|^2-\langle\xi,\eta\rangle^2=|\xi|^2|\eta|^2\cos^2\psi,
$$ 
then 
$$
D_\xi A=2|\eta|^2\xi-2\langle\xi,\eta\rangle\eta,\text{ and } 
D_\eta A=2|\xi|^2\eta-2\langle\xi,\eta\rangle\xi,
$$
$$
\langle \eta, D_\xi A \rangle=\langle \xi,D_\eta A\rangle=0,\text{ and } 
\langle \xi, D_\xi A \rangle=\langle \eta,D_\eta A\rangle=2A.
$$
Observe that
$$
\partial_{\eta_i}\psi=\frac{|\eta|^2\xi_i-\langle \xi,\eta \rangle \eta_i }{|\eta|^2A^{\frac{1}{2}}},
\,\,\,
\partial_{\xi_i}\psi=\frac{|\xi|^2\eta_i-\langle \xi,\eta \rangle \xi_i }{|\xi|^2A^{\frac{1}{2}}}.
$$

\begin{multline}
\partial^2_{\eta_i \eta_j}\psi=\frac{(2\xi_i\eta_j  -  \langle \xi,\eta \rangle \delta_{ij}-\xi_j\eta_i   )|\eta|^2 A^{\frac{1}{2}}
}{|\eta|^4 A}\\
-\frac{(|\eta|^2\xi_i-\langle \xi,\eta \rangle \eta_i )(2\eta_j   A^{\frac{1}{2}}   +|\eta|^2 A^{-\frac{1}{2}}   (|\xi|^2\eta_j-\xta\xi_j) )
}{|\eta|^4 A}\\
=|\eta|^{-4}A^{-1}\Big[
- |\eta|^2 A^{\frac{1}{2}} \langle \xi,\eta \rangle \delta_{ij} + \frac{|\eta|^4\xta}{A^{\frac{1}{2}}}\xi_i\xi_j  \\
-(\xi_i\eta_j+\xi_j\eta_i)\frac{|\xi|^2|\eta|^4}{A^{\frac{1}{2}}}+
 \frac{2A+|\xi|^2|\eta|^2}{A^{\frac{1}{2}}  }   \xta  \eta_i\eta_j 
\Big].
\end{multline}
Further
$$
\partial_{\eta_i}\left(|\eta|f(\psi) \right)=\partial_{\eta_i}|\eta| f(\psi)+|\eta|f'(\psi)\partial_{\eta_i}\psi 
$$
and
\begin{multline}
\partial^2_{\eta_i \eta_j}\left(|\eta| f(\psi) \right)
= f(\psi)\partial^2_{\eta_i \eta_j}|\eta|\\
+f'(\psi)\Big[ \partial_{\eta_j}|\eta|  \partial_{\eta_i}\psi + \partial_{\eta_i}|\eta|  \partial_{\eta_j}\psi   +
|\eta|\partial^2_{\eta_i\eta_j}\psi     \Big] \\
+f''(\psi) |\eta|\partial_{\eta_i}\psi  \partial_{\eta_j}\psi  =
\end{multline}
\begin{multline}
f(\psi)\partial^2_{\eta_i \eta_j}|\eta| \\
+f'(\psi)\Big[ -\frac{\xta}{|\eta|A^{\frac{1}{2}} } \delta_{ij}+|\eta|\frac{\xta}{A^{\frac{3}{2}}}\xi_i\xi_j 
-\frac{\xta^2}{|\eta| A^{\frac{3}{2}}}(\xi_i\eta_j+\xi_j\eta_i)+\frac{|\xi|^2}{|\eta|}\frac{\xta}{A^{\frac{3}{2}}}\eta_i\eta_j
\Big] \\
+f''(\psi)|\eta|\frac{|\eta|^4\xi_i\xi_j-|\eta|^2\langle \xi,\eta \rangle( \xi_i\eta_j+\xi_j\eta_i )+\xta^2\eta_i\eta_j }{|\eta|^4 A}.
\end{multline}
Thus for $\eta$
\begin{multline}
D^2_{\eta}\left(|\eta|f(\psi) \right)\,\eta=
f(\psi)D^2_{\eta}|\eta| \,\eta \\
+f'(\psi)\Big[ -\frac{\xta}{|\eta|A^{\frac{1}{2}} } \eta+|\eta|\frac{\xta^2}{A^{\frac{3}{2}}}\xi
-\frac{\xta^2}{|\eta| A^{\frac{3}{2}}}(|\eta|^2\xi+ \xta\eta)+\frac{|\xi|^2}{|\eta|}\frac{\xta}{A^{\frac{3}{2}}}|\eta|^2\eta
\Big] \\
+f''(\psi)|\eta|\frac{|\eta|^4\xta\xi -|\eta|^2\langle \xi,\eta \rangle(|\eta|^2\xi+ \xta\eta )+\xta^2|\eta|^2\eta  }{|\eta|^4 A}
=\textbf{0}+\textbf{0}+\textbf{0}
\end{multline}
since 
$$
D^2|\eta|\,\eta=|\eta|^{-3}
\left(
\begin{array}{ccc}
(\eta_2^2+\eta_3^2) & -\eta_1\eta_2 & -\eta_1\eta_3
\\
-\eta_1\eta_2 & (\eta_1^2+\eta_3^2) & -\eta_2\eta_3
\\
-\eta_1\eta_3 & -\eta_2\eta_3 & (\eta_1^2+\eta_2^2)
\end{array}
\right)\eta=\textbf{0}.
$$
On the other hand for $\xi$
\begin{multline}
D^2_{\eta}\left(|\eta|f(\psi) \right)\,\xi=
f(\psi)D^2_{\eta}|\eta| \,\xi \\
+f'(\psi)\Big[ -\frac{\xta}{|\eta|A^{\frac{1}{2}} } \xi+|\eta||\xi|^2\frac{\xta}{A^{\frac{3}{2}}}\xi
-\frac{\xta^2}{|\eta| A^{\frac{3}{2}}}(\xta\xi+ |\xi|^2\eta)+\frac{|\xi|^2}{|\eta|}\frac{\xta^2}{A^{\frac{3}{2}}}\eta
\Big] \\
+f''(\psi)|\eta|\frac{|\eta|^4|\xi|^2\xi -|\eta|^2\langle \xi,\eta \rangle(\xta\xi+ |\xi|^2\eta)+\xta^3\eta  }{|\eta|^4 A}=
\end{multline}
\begin{multline}
f(\psi)\frac{1}{|\eta|}\left(\xi-\frac{\xta}{|\eta|^2}\eta\right) +\textbf{0}+
f''(\psi)\frac{1}{|\eta|}\left(\xi-\frac{\xta}{|\eta|^2}\eta\right)= \\
\frac{1}{|\eta|}(f(\psi)+f''(\psi))\left(\xi-\frac{\xta}{|\eta|^2}\eta\right)=|\eta|^{-1}g(\psi)\left(\xi-\frac{\xta}{|\eta|^2}\eta\right),
\end{multline}
while
\begin{multline}
D^2|\eta|\,\xi=|\eta|^{-3}
\left(
\begin{array}{ccc}
(\eta_2^2+\eta_3^2) & -\eta_1\eta_2 & -\eta_1\eta_3
\\
-\eta_1\eta_2 & (\eta_1^2+\eta_3^2) & -\eta_2\eta_3
\\
-\eta_1\eta_3 & -\eta_2\eta_3 & (\eta_1^2+\eta_2^2)
\end{array}
\right)\xi=\\
\frac{1}{|\eta|^3}\left[|\eta|^2I-\big(\eta_i\eta_j\big)_{i,j}\right]\xi=
\frac{1}{|\eta|}\left(\xi-\frac{\xta}{|\eta|^2}\eta\right).
\end{multline}

\vspace{2mm}

\noindent Now let us consider the vector $\xi\times \eta$. First observe that
$$
D^2|\eta|\,(\xi\times\eta)=|\eta|^{-1}(\xi\times\eta).
$$
Then
\begin{multline}
D^2_{\eta}\left(|\eta|f(\psi) \right)\,(\xi\times\eta)=\left(f(\psi)
\frac{1}{|\eta|}-f'(\psi) \frac{\xta}{|\eta|A^{\frac{1}{2}} }\right)(\xi\times\eta)=\\
\frac{1}{|\eta|}\left(f(\psi)-\tan\psi f'(\psi)\right)(\xi\times\eta)=
\frac{m(\psi)}{|\eta|}(\xi\times\eta)
\end{multline}
and
$$
(D^2_\eta F-\rho|\eta|^{-1}D^2_\eta|\eta|) (\xi\times\eta)=a(|\xi|)|\eta|^{-1}\left(m(\psi)-
g(\psi)\right)(\xi\times\eta).
$$
\end{proof}

Lemma \ref{lem:D2Frho} shows that the matrices in (\ref{Frho}) do not coincide 
but it allows one to compute the following difference with $\mu=(v_1'',v_2'',v_3'')^T$:
$$
\left(D^2_\eta F(\xi, \eta)-\rho(\xi,\eta)|\eta|^{-1}D^2|\eta|\right)\mu=\\
\frac{a(|\xi|)}{|\eta|}\left(m(\psi)-g(\psi)
\right) \textup{Proj}_{\xi\times\eta}\mu.
$$
Substituting 
\begin{equation}  
\left(
\begin{array}{cc} \partial_\tau v_1\\\partial_\tau v_2\\\partial_\tau v_3
\end{array}
\right)=
\frac{1}{2}{\tilde \gamma}+
\frac{{\tilde \gamma}'\times ({\tilde \gamma}''\times{\tilde \gamma}')}{|{\tilde \gamma}'|^4}=
\frac{1}{2}\xi+
\frac{\eta\times (\mu\times\eta)}{|\eta|^4},
\end{equation}
we arrive at
\begin{multline}
\partial_\tau {\tilde\gamma} \left(D^2_\eta F(\xi, \eta)-\rho(\xi,\eta)|\eta|^{-1}D^2|\eta|\right){\tilde\gamma}''=\\
\frac{a(|\xi|)}{|\eta|}\left(m(\psi)-g(\psi)
\right)
\left(
\frac{1}{2}\xi+
\frac{\eta\times (\mu\times\eta)}{|\eta|^4}\right)\cdot \textup{Proj}_{\xi\times\eta}\mu.
\end{multline}
Observe that
$$
\eta\times (\mu\times\eta)=|\eta|^2\mu-\langle\mu,\eta\rangle\eta,
$$
and thus 
\begin{multline}
\left(
\frac{1}{2}\xi+
\frac{\eta\times (\mu\times\eta)}{|\eta|^4}\right)\cdot \textup{Proj}_{\xi\times\eta}\mu=
|\eta|^{-2}\mu \cdot\frac{(\xi\times\eta)\mu}{|(\xi\times\eta)|^2}(\xi\times\eta)=\\
 \frac{\text{Vol}(\xi,\eta,\mu)^2}{|\xi|^2|\eta|^{4}\cos^2\psi}= 
 \frac{(\xi\cdot(\eta\times\mu))^2}{|\xi|^2|\eta|^{4}\cos^2\psi}=
 \frac{\kappa^2|\eta|^{2}}{|\xi|^2\cos^2\psi}\left|\textup{Proj}_{\eta\times\nu}\xi\right|^2,
\end{multline}
where $\text{Vol}(\xi,\eta,\mu)=|(\xi\times\eta)\cdot\mu|=|\xi\cdot(\eta\times\mu)|$ is 
the volume of the parallelepiped formed by 
vectors $\xi, \eta,\mu$, and $\kappa=\frac{|\eta\times\mu|}{|\eta|^3}$ is the curvature.
In the last equality we have used that 
$$
\textup{Proj}_{\eta\times\mu}\xi=\textup{Proj}_{\eta\times\nu}\xi
.
$$
We have proven the following lemma.
\begin{lemma} 
Let $\mathfrak{D}_2$ be the expression defined in (\ref{defD2}), and $f$, $g$ and $m$ be as in Lemma~\ref{lem:D2Frho}. Then
\begin{multline}\label{deltaerku}
\mathfrak{D}_2=\partial_\tau {\tilde\gamma} \left(\rho(\xi,\eta)|\eta|^{-1}D^2|\eta|-D^2_\eta F(\xi, \eta)\right){\tilde\gamma}''=\\
-a(|{\tilde\gamma} |)|{\tilde\gamma}' |\left(m(\psi)-g(\psi)
\right)
 \frac{\kappa^2}{|{\tilde\gamma} |^2\cos^2\psi}\left|\textup{Proj}_{{\tilde\gamma}'\times\nu }{\tilde\gamma}  \right|^2.
 \end{multline}
\end{lemma}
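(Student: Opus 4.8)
The plan is to read the lemma off from Lemma~\ref{lem:D2Frho} after inserting the flow equation, so that the only genuine work lies in keeping track of cross products and norms. I set $\xi={\tilde\gamma}$, $\eta={\tilde\gamma}'$, $\mu={\tilde\gamma}''$, and $M:=\rho(\xi,\eta)|\eta|^{-1}D^2|\eta|-D^2_\eta F(\xi,\eta)$. By Lemma~\ref{lem:D2Frho} the symmetric matrix $M$ kills both $\xi$ and $\eta$ and scales $\xi\times\eta$ by $-a(|\xi|)\bigl(m(\psi)-g(\psi)\bigr)/|\eta|$. Whenever $\psi\neq\pm\tfrac{\pi}{2}$ the span of $\xi$ and $\eta$ is precisely the plane $(\xi\times\eta)^\perp$, so $M$ is forced to equal that scalar times the orthogonal projection onto the line through $\xi\times\eta$; hence $M\mu=-\tfrac{a(|\xi|)}{|\eta|}\bigl(m(\psi)-g(\psi)\bigr)\,\textup{Proj}_{\xi\times\eta}\mu$ for the vector $\mu={\tilde\gamma}''$ in particular.

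Next I substitute $\partial_\tau{\tilde\gamma}=\tfrac{1}{2}\xi+|\eta|^{-4}\,\eta\times(\mu\times\eta)$ into $\mathfrak{D}_2=\partial_\tau{\tilde\gamma}\cdot M\mu$. Since $\xi$ and $\eta$ are orthogonal to $\xi\times\eta$, and $\eta\times(\mu\times\eta)=|\eta|^2\mu-\langle\mu,\eta\rangle\eta$, the $\tfrac{1}{2}\xi$ term and the $\langle\mu,\eta\rangle\eta$ term drop out, leaving $\mathfrak{D}_2=-\tfrac{a(|\xi|)}{|\eta|^{3}}\bigl(m(\psi)-g(\psi)\bigr)\,\bigl|\textup{Proj}_{\xi\times\eta}\mu\bigr|^2$.

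It remains to convert $|\textup{Proj}_{\xi\times\eta}\mu|^2$ into the geometric quantity on the right of (\ref{deltaerku}). I would write $|\textup{Proj}_{\xi\times\eta}\mu|^2=\bigl(\mu\cdot(\xi\times\eta)\bigr)^2/|\xi\times\eta|^2$, use $|\xi\times\eta|^2=|\xi|^2|\eta|^2\cos^2\psi$, the cyclic identity $\mu\cdot(\xi\times\eta)=\xi\cdot(\eta\times\mu)$, the curvature relation $|\eta\times\mu|=|\eta|^3\kappa$, and the elementary identity $\bigl(\xi\cdot(\eta\times\mu)\bigr)^2=|\eta\times\mu|^2\,|\textup{Proj}_{\eta\times\mu}\xi|^2$; finally, since $\nu$ is the unit vector along $\eta\times(\mu\times\eta)=|\eta|^2\mu-\langle\mu,\eta\rangle\eta$, that is along the component of $\mu$ orthogonal to $\eta$, one has $\eta\times\mu\parallel\eta\times\nu$ and hence $\textup{Proj}_{\eta\times\mu}\xi=\textup{Proj}_{\eta\times\nu}\xi$. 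Collecting the $|\eta|$-powers and recalling $\xi={\tilde\gamma}$, $\eta={\tilde\gamma}'$ gives $\mathfrak{D}_2=-a(|{\tilde\gamma}|)|{\tilde\gamma}'|\bigl(m(\psi)-g(\psi)\bigr)\tfrac{\kappa^2}{|{\tilde\gamma}|^2\cos^2\psi}\bigl|\textup{Proj}_{{\tilde\gamma}'\times\nu}{\tilde\gamma}\bigr|^2$, which is (\ref{deltaerku}). I expect the only delicate point to be exactly this last step — tracking the $|\eta|$ powers and the $\cos\psi$ factors through the cross-product identities — and I note that the hypothesis $\psi\neq\pm\tfrac{\pi}{2}$ is precisely what makes both the rank-one reduction of $M$ legitimate (it breaks down when $\xi\parallel\eta$) and the $\cos^{-2}\psi$ denominator meaningful.
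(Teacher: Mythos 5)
Your argument is correct and matches the paper's essentially step for step: reading off the rank-one action of the difference matrix from Lemma~\ref{lem:D2Frho}, dropping the $\tfrac{1}{2}\xi$ and $\langle\mu,\eta\rangle\eta$ contributions because they are orthogonal to $\xi\times\eta$, and then converting $\bigl|\textup{Proj}_{\xi\times\eta}\mu\bigr|^2$ via the scalar triple product, $|\xi\times\eta|^2=|\xi|^2|\eta|^2\cos^2\psi$, $|\eta\times\mu|=\kappa|\eta|^3$, and $\eta\times\mu\parallel\eta\times\nu$. The only cosmetic difference is that the paper passes through the quantity $\mathrm{Vol}(\xi,\eta,\mu)^2$ rather than isolating $|\textup{Proj}_{\xi\times\eta}\mu|^2$ first; the bookkeeping of $|\eta|$-powers and $\cos\psi$ factors is the same.
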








\subsection{Computing $\mathfrak{D}_1$}

Now let us try to compute the difference of the terms which do not contain the second derivatives of ${\tilde\gamma}$:
\begin{align*}
\mathfrak{D}_1=
\frac{\rho(\xi,\eta)}{2}
\left(
\begin{array}{cc} \partial_\tau v_1\\\partial_\tau v_2\\\partial_\tau v_3
\end{array}
\right)\cdot{\tilde \gamma}\,\,\,
\\
+\partial_\tau v_1\left[\frac{\partial F}{\partial\xi_1}-
\frac{\partial^2 F}{\partial\xi_1\partial\eta_1}v_1'-
\frac{\partial^2 F}{\partial\xi_2\partial\eta_1}v_2'-
\frac{\partial^2 F}{\partial\xi_3\partial\eta_1}v_3'
\right]\,\,\,
\\
+\partial_\tau v_2\left[\frac{\partial F}{\partial\xi_2}-
\frac{\partial^2 F}{\partial\xi_1\partial\eta_2}v_1'-
\frac{\partial^2 F}{\partial\xi_2\partial\eta_2}v_2'-
\frac{\partial^2 F}{\partial\xi_3\partial\eta_2}v_3'
\right]\,\,\,
\\
+\partial_\tau v_3\left[\frac{\partial F}{\partial\xi_3}-
\frac{\partial^2 F}{\partial\xi_1\partial\eta_3}v_1'-
\frac{\partial^2 F}{\partial\xi_2\partial\eta_3}v_2'-
\frac{\partial^2 F}{\partial\xi_3\partial\eta_3}v_3'
\right].
\end{align*}
Let us compute 
$$
\partial_{\xi_i}F-\eta D_{\xi}\left(\frac{\partial F}{\partial\eta_i}\right)=
\frac{\partial F}{\partial\xi_i}-
\frac{\partial^2 F}{\partial\xi_1\partial\eta_i}\eta_1-
\frac{\partial^2 F}{\partial\xi_2\partial\eta_i}\eta_2-
\frac{\partial^2 F}{\partial\xi_3\partial\eta_i}\eta_3
$$
for
$$
F(\xi,\eta)= a(|\xi|)|\eta|f(\psi).
$$
Substituting
$$
\partial_{\xi_i}\psi=\frac{1}{\cos\psi}\frac{\eta_i}{|\xi||\eta|}-\tan\psi\frac{\xi_i}{|\xi|^2}\text{ and }
\partial_{\eta_i}\psi=\frac{1}{\cos\psi}\frac{\xi_i}{|\xi||\eta|}-\tan\psi\frac{\eta_i}{|\eta|^2},
$$
we obtain
\begin{multline}
\frac{\partial F}{\partial\xi_i}=\frac{|\eta|}{|\xi|} a'(|\xi|)f(\psi)\xi_i+|\eta|a(|\xi|)f(\psi)\partial_{\xi_i}\psi=\\
\frac{|\eta|}{|\xi|}\left( a'(|\xi|)f(\psi)-\frac{a(|\xi|)}{|\xi|}f'(\psi)\tan\psi
\right)\xi_i+
\frac{a(|\xi|)}{|\xi|}\frac{f'(\psi)}{\cos\psi}\eta_i,
\end{multline}
and
\begin{multline}
\frac{\partial^2 F}{\partial\xi_i\partial\eta_j}=
\frac{1}{|\xi||\eta|}\left( a'(|\xi|)f(\psi)-\frac{a(|\xi|)}{|\xi|}f'(\psi)\tan\psi
\right)\xi_i\eta_j\\
+
\frac{|\eta|}{|\xi|}\left( a'(|\xi|)f'(\psi)-\frac{a(|\xi|)}{|\xi|}(f'(\psi)\tan\psi)'
\right)\xi_i\partial_{\eta_j}\psi\\
+
\frac{a(|\xi|)}{|\xi|}\frac{f'(\psi)}{\cos\psi}\delta_{ij}+
\frac{a(|\xi|)}{|\xi|}\left(\frac{f'(\psi)}{\cos\psi}\right)'\eta_i\partial_{\eta_j}\psi,
\end{multline}
and
\begin{multline}
\eta D_{\xi}\left(\frac{\partial F}{\partial\eta_j}\right)=
\frac{1}{|\xi||\eta|}\left( a'(|\xi|)f(\psi)-\frac{a(|\xi|)}{|\xi|}f'(\psi)\tan\psi
\right)\xta\eta_j\\
+\frac{|\eta|}{|\xi|}\left( a'(|\xi|)f'(\psi)-\frac{a(|\xi|)}{|\xi|}(f'(\psi)\tan\psi)'
\right)\xta\partial_{\eta_j}\psi\\
+\frac{a(|\xi|)}{|\xi|}\frac{f'(\psi)}{\cos\psi}\eta_j+
\frac{a(|\xi|)}{|\xi|}\left(\frac{f'(\psi)}{\cos\psi}\right)'|\eta|^2\partial_{\eta_j}\psi=
\\
\left( a'(|\xi|)f(\psi)\sin\psi-\frac{a(|\xi|)}{|\xi|}f'(\psi)\tan\psi\sin\psi+
\frac{a(|\xi|)}{|\xi|}\frac{f'(\psi)}{\cos\psi}
\right)\eta_j\\
+
|\eta|^2\left( a'(|\xi|)f'(\psi)\sin\psi-\frac{a(|\xi|)}{|\xi|}(f'(\psi)\tan\psi)'\sin\psi+
\frac{a(|\xi|)}{|\xi|}\left(\frac{f'(\psi)}{\cos\psi}\right)'
\right)\partial_{\eta_j}\psi.
\end{multline}
Further
\begin{multline}
\partial_{\xi_i}F-\eta D_{\xi}\left(\frac{\partial F}{\partial\eta_i}\right)=\\
\frac{|\eta|}{|\xi|}\left( a'(|\xi|)f(\psi)-\frac{a(|\xi|)}{|\xi|}f'(\psi)\tan\psi
\right)\xi_i+
\frac{a(|\xi|)}{|\xi|}\frac{f'(\psi)}{\cos\psi}\eta_i\\
-
\left( a'(|\xi|)f(\psi)\sin\psi-\frac{a(|\xi|)}{|\xi|}f'(\psi)\tan\psi\sin\psi+
\frac{a(|\xi|)}{|\xi|}\frac{f'(\psi)}{\cos\psi}
\right)\eta_j\\
-|\eta|^2\left( a'(|\xi|)f'(\psi)\sin\psi-\frac{a(|\xi|)}{|\xi|}(f'(\psi)\tan\psi)'\sin\psi+
\frac{a(|\xi|)}{|\xi|}\left(\frac{f'(\psi)}{\cos\psi}\right)'
\right)\partial_{\eta_j}\psi=
\end{multline}
\begin{multline}
=\frac{|\eta|}{|\xi|}\Bigg( a'(|\xi|)f(\psi)-\frac{a(|\xi|)}{|\xi|}f'(\psi)\tan\psi-
a'(|\xi|)f'(\psi)\tan\psi\\
+\frac{a(|\xi|)}{|\xi|}(f'(\psi)\tan\psi)'\tan\psi-
\frac{a(|\xi|)}{|\xi|}\frac{1}{\cos\psi}\left(\frac{f'(\psi)}{\cos\psi}\right)'
\Bigg)\xi_i\\
-\Bigg( a'(|\xi|)f(\psi)\sin\psi-\frac{a(|\xi|)}{|\xi|}f'(\psi)\tan\psi\sin\psi+
\frac{a(|\xi|)}{|\xi|}\frac{f'(\psi)}{\cos\psi}-
\frac{a(|\xi|)}{|\xi|}\frac{f'(\psi)}{\cos\psi}\\
-\tan\psi\left[
a'(|\xi|)f'(\psi)\sin\psi-\frac{a(|\xi|)}{|\xi|}(f'(\psi)\tan\psi)'\sin\psi+
\frac{a(|\xi|)}{|\xi|}\left(\frac{f'(\psi)}{\cos\psi}\right)'
\right]
\Bigg)\eta_i,
\end{multline}
and observing that
$$
\left(\frac{f'(\psi)}{\cos\psi}\right)'-(f'(\psi)\tan\psi)'\sin\psi= f''(\psi)\cos\psi,
$$
we arrive at
\begin{multline}\label{final:comput}
\partial_{\xi_i}F-\eta D_{\xi}\left(\frac{\partial F}{\partial\eta_i}\right)=\\
\left[
a'(|\xi|)f(\psi)-\left(a'(|\xi|)+\frac{a(|\xi|)}{|\xi|}\right)\tan\psi f'(\psi)-\frac{a(|\xi|)}{|\xi|}f''(\psi)
\right]
\left(
\frac{|\eta|}{|\xi|}\xi_i-\sin\psi\eta_i
\right)=\\
\left[
\left(a'(|\xi|)+\frac{a(|\xi|)}{|\xi|}\right)(f(\psi)-\tan\psi f'(\psi))-\frac{a(|\xi|)}{|\xi|}g(\psi)
\right]
\left(
\frac{|\eta|}{|\xi|}\xi_i-\sin\psi\eta_i
\right)=\\
\left[
\left(a'(|\xi|)+\frac{a(|\xi|)}{|\xi|}\right)m(\psi)-\frac{a(|\xi|)}{|\xi|}g(\psi)
\right]
\left(
\frac{|\eta|}{|\xi|}\xi_i-\sin\psi\eta_i
\right)=\\
a(|\xi|)|\eta|
\left[\frac{1}{|\xi|}
\left(\frac{a'(|\xi|)}{a(\xi)}+\frac{1}{|\xi|}\right)m(\psi)-\frac{1}{|\xi|^2}g(\psi)
\right]
\left(
\xi_i-\frac{|\xi|}{|\eta|}\sin\psi\eta_i
\right).
\end{multline}

We have proven the following lemma.
\begin{lemma}
Let $\mathfrak{D}_1$ be the expression defined in (\ref{eqFrho}), and $f$, $g$ and $m$ be as in Lemma~\ref{lem:D2Frho}. Then
\begin{multline}\label{deltamek}
\mathfrak{D}_1=
a(|\xi|)|\eta|
\left[\frac{1}{|\xi|}
\left(\frac{a'(|\xi|)}{a(|\xi|)}+\frac{1}{|\xi|}\right)m(\psi)+\left(\frac{1}{2}-\frac{1}{|\xi|^2}\right)g(\psi)
\right]\xi\cdot \partial_\tau{\tilde\gamma}\\
-
a(|\xi|)|\xi|
\left[\frac{1}{|\xi|}
\left(\frac{a'(|\xi|)}{a(|\xi|)}+\frac{1}{|\xi|}\right)m(\psi)-\frac{1}{|\xi|^2}g(\psi)
\right]
\sin\psi \, \eta\cdot\partial_\tau{\tilde\gamma},
\end{multline}
where $\xi=\tilde{\gamma}$ and $\eta=\tilde{\gamma}'$.
\end{lemma}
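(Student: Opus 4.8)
The plan is to obtain (\ref{deltamek}) by feeding the scalar identity (\ref{final:comput}) into the definition (\ref{eqFrho}) of $\mathfrak{D}_1$ and collecting terms; no new estimate is involved. The first step is to recognize the three bracketed expressions multiplying $\partial_\tau v_1,\partial_\tau v_2,\partial_\tau v_3$ in (\ref{eqFrho}): since $v_j'=\eta_j$ with $\eta=\tilde\gamma'$, the bracket attached to $\partial_\tau v_i$ is exactly
$$\frac{\partial F}{\partial\xi_i}-\sum_{j=1}^3\frac{\partial^2 F}{\partial\xi_j\partial\eta_i}v_j'=\partial_{\xi_i}F-\eta\cdot D_\xi\!\left(\frac{\partial F}{\partial\eta_i}\right),$$
evaluated at $\xi=\tilde\gamma$, $\eta=\tilde\gamma'$, which is precisely the quantity computed in (\ref{final:comput}) for $F(\xi,\eta)=a(|\xi|)|\eta|f(\psi)$, with $g=f''+f$ and $m=f-f'\tan\psi$ as in Lemma~\ref{lem:D2Frho}.

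Next I would substitute (\ref{final:comput}), which says this vector equals
$$c(\xi,\eta)\left(\xi_i-\frac{|\xi|}{|\eta|}\sin\psi\,\eta_i\right),\qquad c(\xi,\eta):=a(|\xi|)|\eta|\left[\frac{1}{|\xi|}\left(\frac{a'(|\xi|)}{a(|\xi|)}+\frac{1}{|\xi|}\right)m(\psi)-\frac{1}{|\xi|^2}g(\psi)\right].$$
Summing $\partial_\tau v_i$ times this over $i$ turns the first three lines of (\ref{eqFrho}) into $c(\xi,\eta)\bigl(\xi-\tfrac{|\xi|}{|\eta|}\sin\psi\,\eta\bigr)\cdot\partial_\tau\tilde\gamma$. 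The last line of (\ref{eqFrho}) is $\tfrac12\rho(\xi,\eta)\,\tilde\gamma\cdot\partial_\tau\tilde\gamma=\tfrac12 a(|\xi|)|\eta|g(\psi)\,\xi\cdot\partial_\tau\tilde\gamma$, since $\rho=a(|\xi|)|\eta|g(\psi)$. Adding the two and grouping by $\xi\cdot\partial_\tau\tilde\gamma$ and $\eta\cdot\partial_\tau\tilde\gamma$: the coefficient of $\xi\cdot\partial_\tau\tilde\gamma$ becomes $a(|\xi|)|\eta|\bigl[\tfrac1{|\xi|}(\tfrac{a'}{a}+\tfrac1{|\xi|})m+(\tfrac12-\tfrac1{|\xi|^2})g\bigr]$ — the $g$-parts combining as $-\tfrac1{|\xi|^2}g+\tfrac12 g$ — while the coefficient of $\eta\cdot\partial_\tau\tilde\gamma$ is $-\,a(|\xi|)|\xi|\sin\psi\bigl[\tfrac1{|\xi|}(\tfrac{a'}{a}+\tfrac1{|\xi|})m-\tfrac1{|\xi|^2}g\bigr]$, which is precisely (\ref{deltamek}).

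Because (\ref{final:comput}) has already been established, this argument is pure bookkeeping and I foresee no genuine obstacle. The only points demanding a little care are clerical: checking that the argument $\eta$ in the $F$-derivatives really is the vector $(v_1',v_2',v_3')$ appearing in (\ref{eqFrho}), so that the brackets may legitimately be identified with the left side of (\ref{final:comput}); and tracking the factor $|\eta|/|\xi|$ correctly when passing between the normalization $\xi_i-\tfrac{|\xi|}{|\eta|}\sin\psi\,\eta_i$ used in (\ref{final:comput}) and the form $|\eta|\xi-|\xi|\sin\psi\,\eta$ that produces the stated coefficients. If one prefers, the whole computation leading to (\ref{final:comput}) — differentiating $a(|\xi|)|\eta|f(\psi)$ twice, using $\partial_{\xi_i}\psi$, $\partial_{\eta_i}\psi$, and the trigonometric identity $\bigl(\tfrac{f'}{\cos\psi}\bigr)'-(f'\tan\psi)'\sin\psi=f''\cos\psi$ — can be regarded as the substantive part, but it is carried out above, so only the assembly remains.
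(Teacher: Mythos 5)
Your proposal is correct and follows the paper's own line of reasoning: the paper derives the scalar identity (\ref{final:comput}) by direct differentiation of $F(\xi,\eta)=a(|\xi|)|\eta|f(\psi)$ and then reads off the lemma by contracting with $\partial_\tau v_i$ and adding the $\tfrac12\rho\,\xi\cdot\partial_\tau\tilde\gamma$ term, exactly as you assemble it. The bookkeeping you carry out — identifying the brackets in (\ref{eqFrho}) with $\partial_{\xi_i}F-\eta\cdot D_\xi(\partial F/\partial\eta_i)$, substituting (\ref{final:comput}), and regrouping the $g$-coefficient as $\tfrac12-\tfrac1{|\xi|^2}$ — is precisely what the paper leaves implicit between (\ref{final:comput}) and the statement of the lemma.
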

\vspace{5mm}

\section{New monotonicity formulas}
\label{sec:newmon}


\subsection{Proof of the Theorem \ref{thm2}}

\begin{proof}
Let us first observe that (\ref{cosformul2}) follows from (\ref{deltamek}) and (\ref{deltaerku})
with $f(\psi)=\sin\psi$ resulting $$
\frac{d}{d\tau}
\int_{S^1}
a(|{\tilde\gamma}|)|{\tilde\gamma}'|\sin\psi
dx=0
$$
implies that the integral in (\ref{cosformul2}) must be a constant. This constant is zero because of the 
known results about convergence to Abresch-Langer curves or a Grim Reaper and 
their symmetry. 

But the equation (\ref{cosformul2}) is rather simple (almost trivial) and can be proven directly. Observe that the expression
$$
\int_{x_1}^{x_2}
|{\tilde\gamma}'|\sin\psi
dx
$$
measures the change of the distance of the point ${\tilde\gamma}(\tau, x)$ from the origin, as
the parameter $x$ varies from $x_1$ to $x_2$. This makes the proof trivial for an arbitrary
closed curve ${\tilde\gamma}$ and a step-function $a(r)$.
The proof follows now by approximation. 

\vspace{3mm}
 
The proof of (\ref{cosformul}) satisfying the condition  (\ref{noturnpsi}) follows from (\ref{deltamek}) and (\ref{deltaerku}) with 
$$
f(\psi)=\cos\psi,\,\,\,a(r)=r^{-1}.
$$ 

For the simplicity let us first consider the case of the plane curves and prove the equation (\ref{cos2D}) from the Corollary \ref{thm3}.

Since the formula (\ref{cosformul}) in $\R^3$ is correct with respect to any reference point we will write it 
for the plane curve 
with respect to the point 
$(0,0,\epsilon)$ and pass to limit $\epsilon\searrow 0+$ (see Figure~3). Obviously the condition (\ref{noturnpsi}) 
is satisfied if we take $(0,0,\epsilon)$ as the reference  point. 
We have
\begin{multline}
\frac{d}{d\tau}\int_{S^1}
\frac{|{\tilde\gamma}_\epsilon'|}{|{\tilde\gamma}_\epsilon|}\cos\psi
dx=
\int_{S^1}
\frac{|{\tilde\gamma}_\epsilon'|}{|{\tilde\gamma}_\epsilon|^3\cos^3\psi}\kappa^2
|\textup{Proj}_{\nu\times{\tilde\gamma}_\epsilon'} {\tilde\gamma}_\epsilon|^2 
dx=\\
\int_{\cos \psi<\delta}
\frac{|{\tilde\gamma}_\epsilon'|}{|{\tilde\gamma}_\epsilon|^3\cos^3\psi}\kappa^2
|\textup{Proj}_{\nu\times{\tilde\gamma}_\epsilon'} {\tilde\gamma}_\epsilon|^2 
dx+
\int_{\cos \psi\geq \delta}
\frac{|{\tilde\gamma}_\epsilon'|}{|{\tilde\gamma}_\epsilon|^3\cos^3\psi}\kappa^2
|\textup{Proj}_{\nu\times{\tilde\gamma}_\epsilon'} {\tilde\gamma}_\epsilon|^2 
dx\\
=I_1+I_2.
\end{multline}
\begin{figure}[h!]
\begin{center}
\includegraphics[scale=0.36]{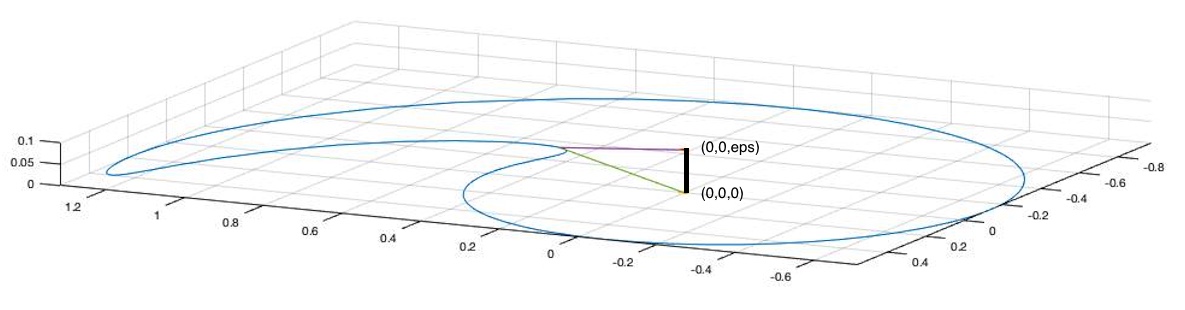}\label{thm32d3d}
\caption{}
\end{center}
\end{figure}
First let us observe that for arbitrary fixed $\delta>0$
$$
|I_2|\leq \delta^{-3} \int_{S_1} \frac{\kappa^2}{|{\tilde \gamma}_\epsilon|}
\left|\textup{Proj}_{\nu\times{\tilde\gamma}_\epsilon'} \frac{{\tilde\gamma}_\epsilon}{|{\tilde\gamma}_\epsilon|}\right|^2 
|{\tilde\gamma}_\epsilon'|dx\to 0.
$$
Let us now choose $\delta>0$ small enough, such that 
$$
\{x\in S_1\, |\, \cos\psi<\delta\}=\cup_{j\in J} (x_j-\omega_j,x_j+\omega_j),
$$ 
where $\cos\psi(x_j)=0$ and the intervals $ (x_j-\omega_j,x_j+\omega_j)$ are disjoint.
Let us pick one of these intervals, which we without loss of generality assume to be 
$(-\omega,\omega)$. 
We will compute the limit $\epsilon\to 0$ of the following integral
\begin{multline}
\int_{-\omega}^\omega 
\frac{|{\tilde\gamma}_\epsilon'|}{|{\tilde\gamma}_\epsilon|^3\cos^3\psi}\kappa^2
|\textup{Proj}_{\nu\times{\tilde\gamma}_\epsilon'} {\tilde\gamma}_\epsilon|^2 
dx
=\\
\int_{-\omega}^\omega \frac{|{\tilde\gamma}'_\epsilon|^4}{(|{\tilde\gamma}_\epsilon|^2|{\tilde\gamma}'_\epsilon|^2-\langle {\tilde\gamma}_\epsilon,{\tilde\gamma}'_\epsilon\rangle^2
)^\frac{3}{2}}
\left(\frac{|{\tilde\gamma}'_\epsilon\times{\tilde\gamma}''_\epsilon|}{|{\tilde\gamma}'_\epsilon|^3}\right)^2
|\textup{Proj}_{\nu\times{\tilde\gamma}_\epsilon'} {\tilde\gamma}_\epsilon|^2 
dx=\\
\int_{-\omega}^\omega \frac{1}{(|{\tilde\gamma}_\epsilon|^2|{\tilde\gamma}'_\epsilon|^2-\langle {\tilde\gamma}_\epsilon,{\tilde\gamma}'_\epsilon\rangle^2
)^\frac{3}{2}}
\frac{|{\tilde\gamma}'_\epsilon\times{\tilde\gamma}''_\epsilon|^2}{|{\tilde\gamma}'_\epsilon|^2}
|\textup{Proj}_{\nu\times{\tilde\gamma}_\epsilon'} {\tilde\gamma}_\epsilon|^2 
dx.
\end{multline}
In order to compute this limit 
we approximate the curve (after rotation) in the interval $x\in (-\omega,\omega)$ by the parabola
\begin{equation}\label{parabola}
{\tilde\gamma}_\epsilon(x)=
\big(|{\tilde\gamma}(0)| +x ,
\tfrac{1}{2}\kappa(0)x^2,
-\epsilon\big)
+
(0,O(x^3),0),
\end{equation}
with
$$
{\tilde\gamma}'_\epsilon(x)=
(1 ,
\kappa(0)x,
0)
+
(0,O(x^2),0)
\,\,\,
\text{ and }
\,\,\,\,
{\tilde\gamma}''_\epsilon(x)=
(0 ,
\kappa(0),
0)
+
(0,O(x),0),
$$
and arrive at

\begin{multline}\label{replaceparab}
\int_{-\omega}^\omega \frac{1}{(|{\tilde\gamma}_\epsilon|^2|{\tilde\gamma}'_\epsilon|^2-\langle {\tilde\gamma}_\epsilon,{\tilde\gamma}'_\epsilon\rangle^2
)^\frac{3}{2}}
\frac{|{\tilde\gamma}'_\epsilon\times{\tilde\gamma}''_\epsilon|^2}{|{\tilde\gamma}'_\epsilon|^2}
|\textup{Proj}_{\nu\times{\tilde\gamma}_\epsilon'} {\tilde\gamma}_\epsilon|^2 
dx
\approx\\
\int_{-\omega}^\omega \frac{1}{(\epsilon^2(1+\kappa^2x^2)+\kappa^2x^2(\frac{x}{2}+|{\tilde\gamma}|)^2)^\frac{3}{2}}
\frac{\kappa^2}{(1+\kappa^2x^2)}
\epsilon^2
dx=\\
\frac{\kappa}{|{\tilde\gamma}|}\int_{-\omega}^\omega
 \frac{1}{\left((1+\kappa^2x^2)+\frac{\kappa^2|{\tilde\gamma}|^2x^2}{\epsilon^2}(\frac{x}{2|{\tilde\gamma}|}+1)^2\right)^\frac{3}{2}}
 \frac{1}{(1+\kappa^2x^2)}
d \underbrace{\frac{\kappa |{\tilde\gamma}|x}{\epsilon}}_{=\sigma}=\\
\frac{\kappa}{|{\tilde\gamma}|}\int_{-\tfrac{\kappa |{\tilde\gamma}|\omega}{\epsilon}}^{\tfrac{\kappa |{\tilde\gamma}|\omega}{\epsilon}}
 \frac{1}{(1+\frac{\epsilon^2\sigma^2}{|{\tilde\gamma}|^2})\left((1+\frac{\epsilon^2\sigma^2}{|{\tilde\gamma}|^2})+
\sigma^2(\frac{\epsilon\sigma}{2|{\tilde\gamma}|^2\kappa}+1)^2\right)^\frac{3}{2}}
d\sigma\to_{\epsilon\to 0}
\\
\frac{\kappa}{|{\tilde\gamma}|}
\int_{-\infty}^\infty 
\frac{1}{(1+\sigma^2)^\frac{3}{2}}
d\sigma=2
\frac{\kappa}{|{\tilde\gamma}|},
\end{multline}
where starting line two we write $\kappa$ for $\kappa(0)$ and $|{\tilde\gamma}|$ for $|{\tilde\gamma}(0)|$, as well as use
$$
\int_{-\infty}^\infty 
\frac{1}{(1+\sigma^2)^\frac{3}{2}}
d\sigma=\int_{-\infty}^\infty 
\frac{1}{(\cosh t)^2}
d t=2.
$$
What remains to observe is that replacing the curve by the parabola in the second line of (\ref{replaceparab})
was justified:
\begin{multline}\label{replace2}
\bigg|\int_{-\omega}^\omega \frac{1}{(|{\tilde\gamma}_\epsilon|^2|{\tilde\gamma}'_\epsilon|^2-\langle {\tilde\gamma}_\epsilon,{\tilde\gamma}'_\epsilon\rangle^2
)^\frac{3}{2}}
\frac{|{\tilde\gamma}'_\epsilon\times{\tilde\gamma}''_\epsilon|^2}{|{\tilde\gamma}'_\epsilon|^2}
|\textup{Proj}_{\nu\times{\tilde\gamma}_\epsilon'} {\tilde\gamma}_\epsilon|^2 
dx
-\\
\int_{-\omega}^\omega \frac{1}{(\epsilon^2(1+\kappa^2x^2)+\kappa^2x^2(\frac{x}{2}+|{\tilde\gamma}|)^2)^\frac{3}{2}}
\frac{\kappa^2}{(1+\kappa^2x^2)}
\epsilon^2
dx\bigg|=\\
\bigg|
\int_{-\omega}^\omega \frac{1}{(\epsilon^2(1+\kappa^2x^2)+\kappa^2x^2(\frac{x}{2}+|{\tilde\gamma}|)^2+O(x^3))^\frac{3}{2}}
\frac{(\kappa+O(x))^2}{(1+\kappa^2x^2+O(x^3))}
\epsilon^2
dx
-\\
\int_{-\omega}^\omega \frac{1}{(\epsilon^2(1+\kappa^2x^2)+\kappa^2x^2(\frac{x}{2}+|{\tilde\gamma}|)^2)^\frac{3}{2}}
\frac{\kappa^2}{(1+\kappa^2x^2)}
\epsilon^2
dx\bigg|=\\
\bigg|\int_{-\omega}^\omega \frac{1}{(\epsilon^2(1+\kappa^2x^2)+\kappa^2x^2(\frac{x}{2}+|{\tilde\gamma}|)^2)^\frac{3}{2}}
\frac{O(x)}{(1+\kappa^2x^2)}
\epsilon^2
dx\bigg|\leq\\
M\epsilon\int_{-\infty}^\infty 
\frac{|\sigma|}{(1+\sigma^2)^\frac{3}{2}}
d\sigma\to_{\epsilon\to 0}0,
\end{multline}
where the last inequality follows from the computations in (\ref{replaceparab}), with $M$ 
being a large enough constant depending on $\tilde\gamma$.

\vspace{3mm}

The proof the equation (\ref{cosformul}) in $\R^3$ follows by the same argument. One needs only to 
observe that 
if we 
replace the parabola (\ref{parabola}) by the non-plane curve
$$
{\tilde\gamma}_\epsilon(x)=
\big(|{\tilde\gamma}(0)| +x ,
\tfrac{1}{2}\kappa(0)x^2,
-\epsilon \big)
+
(0,O(x^3),O(x^3)),
$$
then in the computations (\ref{replace2}) instead of
$|\textup{Proj}_{\nu\times{\tilde\gamma}_\epsilon'} {\tilde\gamma}_\epsilon|^2 =\epsilon^2$
we will have 
$$|\textup{Proj}_{\nu\times{\tilde\gamma}_\epsilon'} {\tilde\gamma}_\epsilon|^2=\epsilon^2(1+O(x))^2,$$
which would not change the vanishing limit.
\end{proof}




\subsection{Proof of the Theorem \ref{thm1}}
\begin{proof}
We will not only verify the statement of the theorem, but rather present how the formula (\ref{monformHG}) is being derived.

The second term in (\ref{deltamek}) is a ``good'' one since
$$
\eta\cdot\partial_\tau{\tilde\gamma}=\eta\cdot \left(\frac{1}{2} \xi+\kappa\nu  \right)=\frac{1}{2}\xta=\frac{1}{2}|\xi||\eta|\sin\psi.
$$
Our strategy now is to make the first term in (\ref{deltamek}) to vanish, which is only possible if 
$$
m(\psi)=\lambda g(\psi).
$$
Ideally we would be happy to have $\lambda=1$, which would make $\mathfrak{D}_2=0$, but as we will see, this
will lead to $f(\psi)=g(\psi)=\text{const}$ and $a(r)=e^{-\frac{r^2}{4}}$, i.e., Huisken's formula. Indeed, we have
\begin{equation}\label{fcond1}
f+f''=g
\end{equation}
and we want in addition 
\begin{equation}\label{fcond2}
m(\psi)=f(\psi)-f'(\psi)\tan\psi=\lambda g(\psi). 
\end{equation}
Differentiating the latter equation and using $f''=g-f$ we obtain
\begin{multline*}
f'(\psi)-f'(\psi)\frac{1}{\cos^2\psi}-f''(\psi)\tan\psi=\\
-f'(\psi)\tan^2\psi+f(\psi)\tan\psi-g(\psi)\tan\psi=\lambda g'(\psi).
\end{multline*}
Substituting $m(\psi)=f(\psi)-f'(\psi)\tan\psi=\lambda g(\psi)$ we arrive at
$$
(\lambda-1)g(\psi)\tan\psi=\lambda g'(\psi).
$$
Solving
$$
\frac{g'(\psi)}{g(\psi)}=\frac{\lambda-1}{\lambda}\tan\psi
$$
we obtain
$$
g(\psi)=\left(\frac{1}{\cos\psi}\right)^{\frac{\lambda-1}{\lambda}}.
$$
This is of course only a necessary condition, and we need to find an appropriate $f_\lambda$, satisfying (\ref{fcond1}) and (\ref{fcond2}). The general solution to (\ref{fcond1}) is
\begin{equation}\label{flambgen}
\int_0^\psi g_\lambda(t)\sin(\psi-t)dt+c_1\cos\psi+c_2\sin\psi.
\end{equation}
Computing
\begin{multline}
\int_0^\psi g_\lambda(t)\sin(\psi-t)dt=\\
\sin\psi\int_0^\psi (\cos t)^\frac{1}{\lambda}dt+\cos\psi( \lambda(\cos \psi)^{\frac{1}{\lambda}}-\lambda)=\\
\sin\psi\int_0^\psi (\cos t)^\frac{1}{\lambda}dt+\lambda (\cos \psi)^{1+\frac{1}{\lambda}}-\lambda\cos\psi,
\end{multline}
hence we chose for $f_\lambda$ in (\ref{flambgen}) $c_2=0$ and $c_1=\lambda$, which leads to (\ref{fcond2}).

To make the first term in (\ref{deltamek}) vanish we now solve the equation for $a(r)$
$$
\lambda
\left(\frac{a'(r)}{a(r)}+\frac{1}{r}\right)+\frac{r}{2}-\frac{1}{r}=0,
$$
and obtain
$$
a(r)=e^{-\frac{r^2}{4\lambda}}r^{\frac{1-\lambda}{\lambda}}.
$$
As a result using (\ref{main}) we obtain
\begin{equation}\label{deltamekhuis}
\mathfrak{D}_1=\frac{1}{2}
a(|\xi|)|\xi|
g(\psi)
\sin\psi \, \eta\cdot\partial_\tau{\tilde\gamma}
=\frac{1}{4}
a(|\xi|)|\xi|^2|\eta|
g(\psi)
\sin^2\psi=
\frac{1}{4}
\rho(\xi, \eta)
\left|\textup{Proj}_{\eta} \xi \right|^2,
\end{equation}
and
\begin{multline}
\mathfrak{D}_2=-a(|\xi|)
(\lambda-1)g(\psi)
 \frac{|\eta|\kappa^2}{|\xi|^2\cos^2\psi}\left|\textup{Proj}_{\eta\times\nu}\xi\right|^2=\\
- (\lambda-1)\rho(\xi, \eta)
 \frac{\kappa^2}{|\xi|^2\cos^2\psi}\left|\textup{Proj}_{\eta\times\nu}\xi\right|^2.
 \end{multline}
Due to (\ref{plusminus}) we have
\begin{multline}
\frac{d}{d\tau}\int_{S^1}F_\lambda({\tilde\gamma},{\tilde\gamma}')dx=\\
-\int_{S^1}
\left(
\frac{1}{4}|\textup{Proj}_{\nu\times{\tilde\gamma}'} {\tilde\gamma}|^2
+\left| \kappa+\frac{1}{2}\langle{\tilde\gamma},\nu\rangle\right|^2
\right)
\rho_\lambda({\tilde\gamma},{\tilde\gamma}')dx\\
-
(\lambda-1)\int_{S^1}
|\textup{Proj}_{\nu\times{\tilde\gamma}'} {\tilde\gamma}|^2 \frac{\kappa^2}{|{\tilde\gamma}|^2\cos^2\psi}
\rho_\lambda({\tilde\gamma},{\tilde\gamma}')dx=\\
-\int_{S^1}
\left| \kappa+\frac{1}{2}\langle{\tilde\gamma},\nu\rangle\right|^2
\rho_\lambda({\tilde\gamma},{\tilde\gamma}')dx\\
-
\int_{S^1}
\left(\frac{1}{4}+(\lambda-1)\frac{\kappa^2}{|{\tilde\gamma}|^2\cos^2\psi}\right)
|\textup{Proj}_{\nu\times{\tilde\gamma}'} {\tilde\gamma}|^2 
\rho_\lambda({\tilde\gamma},{\tilde\gamma}')dx.
\end{multline}
\end{proof}



\subsection{Proof of the Theorem \ref{thm4}}
\begin{proof}
Similarly to the previous proof we need to compute (\ref{deltamek}) 
and (\ref{deltaerku}) for the particular choice of the function
$F$.
To simplify the computations let us write
$$
F(\xi,\eta)=a_1(|\xi|)|\eta|f_1(\psi)-a_2(|\xi|)|\eta|f_2(\psi),
$$
where 
$$
a_1(r)=r^{-1}\, ,\,\,\,f_1(\psi)=h(\psi)\, ,\,\,\,a_2(r)=b(r)\, ,\,\,\,f_2(\psi)=\cos\psi.
$$
By design 
$$
h''(\psi)+h(\psi)=\frac{1}{\cos\psi}
$$
and thus
$$
m_1(\psi)=\frac{\log\cos\psi}{\cos\psi}\, ,\,\,\,g_1(\psi)=m_2(\psi)=\frac{1}{\cos\psi}\, ,\,\,\,
g_2(\psi)=0.
$$
Moreover, since 
$$
\frac{a_1'(r)}{a_1(r)}+\frac{1}{r}=0,
$$  
and
$$
a_2'(r)+\frac{1}{r}a_2(r)=\frac{1}{2}-\frac{1}{r^2},
$$  
we can easily substitute functions above into  (\ref{deltamek}) and 
compute $\mathfrak{D}_1$ with $\xi=\tilde\gamma$ and $\eta={\tilde\gamma}'$:
\begin{multline}
\mathfrak{D}_1=
\frac{|\eta|}{|\xi|}\left(\frac{1}{2}-\frac{1}{|\xi|^2}\right)\frac{1}{\cos\psi}
\xi\cdot \partial_\tau{\tilde\gamma}
+
\frac{1}{|\xi|^2}\frac{1}{\cos\psi}
\sin\psi \, \eta\cdot\partial_\tau{\tilde\gamma}\\
-\frac{|\eta|}{|\xi|}
\left(\frac{1}{2}-\frac{1}{|\xi|^2}\right)\frac{1}{\cos\psi}
\xi\cdot \partial_\tau{\tilde\gamma}
+
\left(\frac{1}{2}-\frac{1}{|\xi|^2}\right)\frac{1}{\cos\psi}
\sin\psi \, \eta\cdot\partial_\tau{\tilde\gamma}\\
=\frac{1}{2}\frac{1}{\cos\psi}
\sin\psi \, \eta\cdot\partial_\tau{\tilde\gamma}=
\frac{1}{4}\frac{|\eta|}{|\xi|}\frac{1}{\cos\psi}\left| \textup{Proj}_\eta \xi\right|^2,
\end{multline}
where in the last step we use (\ref{main}), like in (\ref{deltamekhuis}).
Similarly, following  (\ref{deltaerku}) we obtain
\begin{multline}
\mathfrak{D}_2=\\
-\Big[a_1(|{\tilde\gamma} |)\left(m_1(\psi)-g_1(\psi)\right)-a_2(|{\tilde\gamma} |)\left(m_2(\psi)-g_2(\psi)
\right)\Big]
 \frac{|{\tilde\gamma}' |\kappa^2}{|{\tilde\gamma} |^2\cos^2\psi}\left|\textup{Proj}_{{\tilde\gamma}'\times\nu }{\tilde\gamma}  \right|^2\\
 =\Big(1+|\xi|b(|\xi|)-\log\cos\psi\Big)
 \frac{|{\tilde\gamma}' |\kappa^2}{|{\tilde\gamma} |^3\cos^3\psi}\left|\textup{Proj}_{{\tilde\gamma}'\times\nu }{\tilde\gamma}  \right|^2.
  \end{multline}
  This together with (\ref{plusminus}) completes the proof.
\end{proof}


\vspace{3mm}

{\bf Declarations.} 
The author has no conflicts of interest to declare that are relevant to the content of this article.
There are no data associated with this research.



\end{document}